\newtheorem{thm}{Theorem}{\bf}{\it}
\newtheorem{lem}[thm]{Lemma}{\bf}{\it}
{\bf}{\it}
{\bf}{\it}
{\bf}{\it}
{\bf}{\it}
{\bf}{\it}
{\bf}{\it}
\newtheorem{cor}[thm]{Corollary}{\bf}{\it}
{\bf}{\it}
{\bf}{\it}
\theoremstyle{definition}
\newtheorem{defn}[thm]{Definition}{\bf}{\rm}
\newtheorem{ex}[thm]{Example}{\bf}{\rm}
{\bf}{\rm}
\newtheorem{rem}[thm]{Remark}{\bf}{\rm}
\newcommand{\D}{\mathcal{D}}
\def\sfrac#1#2{\mbox{\footnotesize$\displaystyle\frac{#1}{#2}$}}
\def\sfrac#1#2{\mbox{\footnotesize$\displaystyle\frac{#1}{#2}$}}
\newcommand{\C}{\mathcal{C}}
\def\@makefnmark{\hbox{$\m@th^{\@thefnmark}$}}
\author{Erwan Faou}
\address{INRIA \& ENS Cachan Bretagne, Avenue Robert Schumann, F-35170 Bruz, France. }
\email{erwan.faou@inria.fr}
\author{Alexander Ostermann}
\address{Institut f\"ur Mathematik, Universit\"at Innsbruck, Technikerstr.~13, A-6020 Innsbruck, Austria. }
\email{alexander.ostermann@uibk.ac.at}
\author{Katharina Schratz}
\address{INRIA \& ENS Cachan Bretagne, Avenue Robert Schumann, F-35170 Bruz, France. }
\email{katharina.schratz@inria.fr}
\title[Exponential splitting methods]{Analysis of exponential splitting methods\\ for  inhomogeneous  parabolic equations}
\begin{document}

\begin{abstract}
We analyze the convergence of the exponential Lie and exponential Strang splitting applied to  inhomogeneous  second-order parabolic equations  with Dirichlet boundary conditions.   A recent result on the smoothing properties of these methods allows us to prove sharp convergence results in the case of homogeneous Dirichlet boundary conditions.  When no source term is present  and natural regularity assumptions are imposed   on the initial value, we show full-order convergence of both methods. For inhomogeneous equations, we prove full-order convergence for the exponential Lie splitting, whereas order reduction to $1.25-\varepsilon$ for arbitrary small $\varepsilon>0$ for the exponential Strang splitting. Furthermore, we give sufficient conditions on the inhomogeneity for full-order convergence of both methods. Moreover our theoretical convergence results explain the severe order reduction to $0.25 -\varepsilon$ for every $\varepsilon>0$ of splitting methods applied to problems involving inhomogeneous Dirichlet boundary conditions. We include numerical experiments to underline the sharpness of our theoretical convergence results.
\end{abstract}

\subjclass{65L12, 65J08, 65M15}
\keywords{exponential splitting, parabolic problems, convergence, order reduction, boundary conditions}
\thanks{}

\maketitle
%\tableofcontents
\section{Introduction}
The focus of this work lies in a rigorous error analysis of the exponential Lie and Strang splitting applied to second-order inhomogeneous parabolic evolution equations
\begin{equation}
\begin{aligned}\label{eq:model}
& \partial_t w(t,x,y) =\mathcal{L}(\partial_x,\partial_y)w(t,x,y)+\psi(t,x,y),\quad (x,y) \in \Omega = (0,1)^2, \ t\in ]0,T],\\
& w(0,x,y) = w_0(x,y),\\
& w(t,\cdot,\cdot)\big\vert_{\partial\Omega} = 0 \quad \text{for all }t\in [0,T],
\end{aligned}
\end{equation}
where $\mathcal{L}(\partial_x,\partial_y)=\partial_x \big(a(x,y)\partial_x\big)+\partial_y\big(b(x,y)\partial_y\big)$ denotes a strongly elliptic differential operator with sufficiently smooth (positive) coefficients $a$ and $b$. In particular, we elucidate the following phenomena: for arbitrary inhomogeneities $\psi$ the exponential Strang splitting  naturally induced by the decomposition of $\mathcal{L}$   suffers from an order reduction whereas the exponential Lie splitting is full-order convergent, see Figure~\ref{figure1}  in Section 5   for a numerical example. Nevertheless, for a certain choice of $\psi$ also the Strang splitting reaches full second-order convergence, see Figure~\ref{figure2}. A recent result on the smoothing properties of exponential splitting methods, see \cite{OstS12}, allows us to prove this order behavior rigorously. We can show full-order convergence of the exponential Lie splitting method,  despite the fact that  its local order of consistency  reduces to $0.25-\varepsilon$ for arbitrary small $\varepsilon>0$. Furthermore we can state a sharp result on the global order reduction to $1.25 - \varepsilon$ for arbitrary small $\varepsilon>0$ of the exponential Strang splitting applied to equations with general inhomogeneities $\psi$.

Operator splitting methods have gained a lot of attention in recent years, as in many situations they allow a keen reduction of the computational cost. For a general introduction to splitting methods, we refer to \cite{HunV2003}, \cite{McLacQ02} and references therein. Particularly, exponential splitting methods with relatively bounded operators were studied in \cite{JahL00}; the situation of two unbounded operators in the homogeneous parabolic case was studied in \cite{HanO09} and \cite{HanO09H}. However, due to the recent smoothing result, we can improve the error bounds in the homogeneous parabolic setting given in the last two references. Resolvent splitting applied to inhomogeneous equations was discussed in \cite{OstS12b}.

This work in particular distinguishes from the above as inhomogeneous Dirichlet boundary conditions are included in the analysis. It is well know that inhomogeneous Dirichlet boundary conditions generally lead to a severe order reduction of splitting methods,\footnote[1]{\emph{Particular attention will be paid to the order reduction caused by boundary conditions since that is often the main reason for a disappointing convergence behavior with splitting methods}; \cite[p.\;348]{HunV2003}.} where  for an introductory reading we refer to \cite[Section IV.2]{HunV2003}. Our theoretical analysis allows us a precise explanation of these phenomena, and numerical experiments confirm the sharpness of the derived convergence bounds.

For the analysis, the model problem \eqref{eq:model} is formulated as an abstract evolution equation in $L^2(\Omega)$, where for convenience we denote by $\Vert\cdot\Vert$ the $L^2(\Omega)$ as well as the $L^2(\Omega)$-operator norm. Thus, the model problem reads
\begin{equation}\label{eq:abM}
\begin{aligned}
&u'(t) = Lu(t) + g(t) = Au(t)+Bu(t)+g(t), \quad t\in ]0,T],\\
&u(0) = u_0
\end{aligned}
\end{equation}
with $u(t) = w(t,\cdot,\cdot)$, $Au = \partial_x \big(a\partial_x u\big)$, $Bu = \partial_y \big(b\partial_y u\big)$, and $g(t) = \psi(t,\cdot,\cdot)$.

For the numerical time integration, we apply the so called dimension splitting, where the differential operator $L$ is split along its dimensions. Let $h>0$ denote the step size. At time $t_{n+1} = (n+1)h$ the exact solution of \eqref{eq:abM} is approximated by exponential Lie splitting
\begin{equation}\label{Lie}
u_{n+1} = \mathrm{e}^{hA}\mathrm{e}^{hB}\big(u_n+hg(t_n)\big)
\end{equation}
or the exponential Strang splitting
\begin{equation}\label{Strang}
u_{n+1} = \mathrm{e}^{\frac{h}{2}A}\mathrm{e}^{\frac{h}{2}B}\Big(\mathrm{e}^{\frac{h}{2}B}\mathrm{e}^{\frac{h}{2}A} u_n+hg(t_n+\textstyle\frac{h}{2})\Big),
\end{equation}
respectively. In Section \ref{sec:conv} the advantage of choosing this particular structure of the exponential Strang splitting is discussed in more detail.

In the bounded setting one can easily see via Taylor series expansion that the \emph{classical order} of the exponential Lie splitting \eqref{Lie} and the exponential Strang splitting \eqref{Strang} is one and two, respectively. Under particular assumptions on the initial value $u_0$ and inhomogeneity $g$ we will show that full-order convergence of both methods also holds true in our unbounded setting, see Section \ref{sec:conv}, Theorems \ref{thm:inhLie} and \ref{thm:inhStrang}, respectively. In particular we can explain the numerical observed full-order convergence of the exponential Strang splitting in \cite{HanO09H}. In the situation of an arbitrary inhomogeneity $g$, we will prove full-order convergence for the exponential Lie splitting, whereas order reduction to $1.25-\varepsilon$ for arbitrary small $\varepsilon>0$ for the exponential Strang splitting, see Section \ref{sec:conv}, Theorems \ref{thm:inhLie} and \ref{thm:inhStrang}, respectively. In Section \ref{inhB} we explain the disappointing performance of splitting methods applied to equations involving inhomogeneous Dirichlet boundary conditions by applying our newly derived theoretical results, see Corollaries~\ref{cor:bc0} and \ref{cor:bc}. Finally, in Section \ref{num} we illustrate the sharpness of our theoretical convergence results with numerical experiments.

We commence with a section on the analytic framework.

\section{Analytic framework}
In order to start with the convergence analysis we need some functional analytic results on the domains of the operators in \eqref{eq:abM}. In the following we set
\begin{equation*}
\begin{aligned}
& \D(L) = H^2(\Omega)\cap H_0^1(\Omega),\\
& \D(A) = \{u\in L^2(\Omega)\,;\; \partial_{xx}u,\partial_x u \in L^2(\Omega),\ u(0,y) = u(1,y) = 0 \text{ f.a.e } y \in (0,1)\},\\
& \D(B) = \{u\in L^2(\Omega)\,;\; \partial_{yy}u,\partial_y u \in L^2(\Omega),\ u(x,0) = u(x,1) = 0 \text{ f.a.e } x \in (0,1)\}.
\end{aligned}
\end{equation*}
We will also need the compositions of the operator $L$ and therefore introduce the spaces
$$
\D(L^k) =\big\{ u\in \D(L^{k-1})\,;\, Lu\in\D(L^{k-1})\big\}, \quad k=1,2,3,\ldots
$$
Note that all the appearing operators $(\D(A),A)$, $(\D(B),B)$ and $(\D(L),L)$ generate analytic semigroups of contractions on $L^2(\Omega)$, see \cite{OstS12}. For a general introduction to the theory of analytic semigroups it is referred to \cite{Pazy1983}. Furthermore note that $\D(L^\gamma)$ is free from boundary conditions if and only if $\gamma < \frac{1}{4}$, see \cite{Fujiwara67}.

In the following we state some results on the regularity and compatibility between the domains of the full operator $L$, the split operators $A$, $B$, and their compositions, which we will need later on in our convergence analysis.
\begin{lem}\label{lem:L3}
The following regularity results hold
\begin{equation*}
\D(L^2)\subset H^4(\Omega)\quad\text{and}\quad \D(L^3)\subset H^{5-\varepsilon}(\Omega)\quad \text{for all } \varepsilon>0.
\end{equation*}
\end{lem}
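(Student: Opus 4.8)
The plan is to exploit elliptic regularity for the operator $L=\partial_x(a\partial_x\cdot)+\partial_y(b\partial_y\cdot)$ together with a bootstrap argument that successively trades regularity of $Lu$ for regularity of $u$, while keeping careful track of the compatibility (boundary) conditions encoded in $\D(L^k)$. Recall that $L$ is strongly elliptic with smooth coefficients on the square $\Omega=(0,1)^2$, so the standard elliptic shift theorem gives, for $f\in H^m(\Omega)$ and the solution $u$ of $Lu=f$ with $u\big|_{\partial\Omega}=0$, the bound $u\in H^{m+2}(\Omega)$ \emph{provided} the data satisfy the appropriate compatibility conditions at the corners of the square; on a smooth domain this is unconditional, but on a polygon one must check the corner conditions. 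This caveat is exactly why the second statement only reaches $H^{5-\varepsilon}$ rather than $H^5$.

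First I would prove $\D(L^2)\subset H^4(\Omega)$. If $u\in\D(L^2)$ then $u\in\D(L)$ and $Lu\in\D(L)\subset H^2(\Omega)\cap H^1_0(\Omega)$. Applying the elliptic shift theorem to $Lu=f$ with $f\in H^2(\Omega)$ and homogeneous Dirichlet data would naively give $u\in H^4(\Omega)$; the point to verify is that the corner compatibility condition needed for $H^4$-regularity on the square is automatically met here. The crucial observation is that $f=Lu$ itself vanishes on $\partial\Omega$ (since $Lu\in H^1_0(\Omega)$), so $f|_{\partial\Omega}=0$, and this is precisely the first-order compatibility condition that unlocks the $H^4$ shift on a rectangle (the relevant obstruction for $H^4$ is $\Delta$-type traces of $f$ on the boundary, and $f$ vanishing there kills the leading corner singularity). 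Thus $u\in H^4(\Omega)$.

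For $\D(L^3)\subset H^{5-\varepsilon}(\Omega)$ I would iterate once more: if $u\in\D(L^3)$ then $f:=Lu\in\D(L^2)\subset H^4(\Omega)$ by the first part, and again $f$ vanishes on $\partial\Omega$ together with $Lf\in H^1_0(\Omega)$, giving some compatibility. Solving $Lu=f$ with $f\in H^4(\Omega)$ would formally give $u\in H^6(\Omega)$, but on the square the corner singularities now genuinely obstruct regularity beyond roughly $H^5$: the corner exponents for the Laplacian-type operator on a right angle produce singular functions of the form $r^{2}\log r$ (or $r^{2k}\log r$ after the compatibility from $f|_{\partial\Omega}=0$ is used), which lie in $H^{5-\varepsilon}$ for every $\varepsilon>0$ but not in $H^5$. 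So I would invoke the Kondrat'ev--Grisvard corner-regularity theory for polygonal domains (e.g.\ Grisvard, \emph{Elliptic Problems in Nonsmooth Domains}) to conclude $u\in H^{5-\varepsilon}(\Omega)$ for all $\varepsilon>0$, with the $\log$-term being exactly what prevents the endpoint.

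The main obstacle is the careful bookkeeping of corner compatibility: one must show that the membership $Lu\in H^1_0(\Omega)$ (and $L^2u\in H^1_0(\Omega)$ in the second case) supplies precisely the compatibility conditions required to push the elliptic shift as far as claimed, and one must correctly identify the residual corner singularity that caps the regularity at $5-\varepsilon$ rather than $5$. A cleaner route, if available in the cited references, is to quote directly the characterization that $\D(L^\gamma)$ picks up its first nontrivial corner obstruction at the fractional level corresponding to $\gamma$ slightly above the threshold $\tfrac14$ (consistent with the remark that $\D(L^\gamma)$ is boundary-condition-free iff $\gamma<\tfrac14$, from \cite{Fujiwara67}); combining this fractional-power description with the already-established $H^4$ regularity of $\D(L^2)$ and interpolation/lifting then yields $H^{5-\varepsilon}$ for $\D(L^3)$ with the same $\log$-obstruction accounting for the loss of $\varepsilon$.
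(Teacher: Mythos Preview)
Your proposal is correct and is essentially the approach the paper has in mind: the paper does not give a self-contained argument but simply notes that the result follows from the classical compatibility-condition analysis for strongly elliptic Dirichlet problems on a square, citing Grisvard and deferring the full proof to~\cite{HeSW}. Your sketch---bootstrap via the elliptic shift theorem together with Kondrat'ev--Grisvard corner-regularity theory, using that $Lu\in H^1_0(\Omega)$ (resp.\ $L^2u\in H^1_0(\Omega)$) supplies the compatibility needed to reach $H^4$ (resp.\ $H^{5-\varepsilon}$), with the residual $r^4\log r$-type corner singularity accounting for the $\varepsilon$-loss---is exactly the content behind those citations.
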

\begin{proof}
 This result can be derived from an analysis of the classical compatibility conditions for a strongly elliptic operator with Dirichlet boundary conditions on a square domain, see for instance \cite{Grisvard1985}.  A complete proof can be found in \cite{HeSW}.
\end{proof}
As the operator $(\D(L),L)$ is considered on the unit square $\Omega$, i.e., a corner domain, the standard regularity results do not hold true. This is due to the so called corner singularities that arise when solving partial differential equations on non-smooth domains. In particular, for general coefficients, $\D(L^3)\nsubseteq H^6(\Omega)$.
\begin{lem}\label{lem:ABL2}
The domains of the operators satisfy the following compatibility condition
\begin{equation*}
\D(L^2)\subseteq \D(A^2)\cap\D(AB)\cap\D(BA)\cap\D(B^2).
\end{equation*}
\end{lem}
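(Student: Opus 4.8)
The plan is to prove the inclusion $\D(L^2)\subseteq \D(A^2)\cap\D(AB)\cap\D(BA)\cap\D(B^2)$ by combining the elliptic regularity statement $\D(L^2)\subset H^4(\Omega)$ from Lemma~\ref{lem:L3} with a careful bookkeeping of the boundary conditions satisfied by $u$ and by $Lu$ when $u\in\D(L^2)$. Concretely, take $u\in\D(L^2)$. Then $u\in\D(L)$ and $Lu\in\D(L)$, so in particular $u$ and $Lu$ both vanish on $\partial\Omega$, and by Lemma~\ref{lem:L3} we have $u\in H^4(\Omega)$. From $u\in H^4(\Omega)$ all the second-order mixed derivatives $\partial_{xx}u$, $\partial_{yy}u$, $\partial_{xxxx}u$, $\partial_{xxyy}u$, $\partial_{yyyy}u$ lie in $L^2(\Omega)$, which already gives the interior (i.e.\ differentiability) part of membership in $\D(A^2)$, $\D(AB)$, $\D(BA)$, $\D(B^2)$; what remains is to verify the trace conditions that define these domains.

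The key step is unwinding what $u\in\D(A^2)$ etc.\ actually requires. For instance, $u\in\D(A^2)$ means $u\in\D(A)$ and $Au=\partial_x(a\,\partial_x u)\in\D(A)$; the first condition is $u(0,y)=u(1,y)=0$, which holds since $u\in\D(L)$, and the second requires $(Au)(0,y)=(Au)(1,y)=0$ for a.e.\ $y$ together with enough $x$-regularity of $Au$, which follows from $u\in H^4$. To get the trace $(Au)(0,y)=0$ I would use that $Lu = Au+Bu$ vanishes on $\partial\Omega$, hence on the edge $x=0$ one has $Au|_{x=0} = -Bu|_{x=0}$; but on that edge $u(x,0)$... wait, rather: on the edge $\{x=0\}$, $u$ vanishes identically in $y$, so all tangential derivatives $\partial_y u$, $\partial_{yy}u$ vanish there, hence $Bu=\partial_y(b\,\partial_y u)$ vanishes on $\{x=0\}$ (using $u\in H^4$ so these traces make sense), and therefore $Au|_{x=0} = (Lu - Bu)|_{x=0} = 0$. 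The symmetric argument on $\{x=1\}$, and the analogous argument with the roles of $A$ and $B$ swapped on the horizontal edges, handles $\D(B^2)$. For the mixed terms, $u\in\D(AB)$ requires $u\in\D(B)$ (i.e.\ $u$ vanishes on the horizontal edges, true) and $Bu\in\D(A)$ (i.e.\ $Bu$ vanishes on the vertical edges, which we just established, plus $x$-regularity of $Bu$ from $u\in H^4$); and $u\in\D(BA)$ requires $u\in\D(A)$ and $Au\in\D(B)$, i.e.\ $Au$ vanishes on the horizontal edges, which follows symmetrically since $Lu$ and $Bu$ vanish there.

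The main obstacle, and the point that needs to be written carefully rather than waved through, is the justification that traces of the second-order terms $Au$ and $Bu$ onto individual edges are well-defined and that the edge identities $Lu = Au+Bu$ may be restricted to the boundary — this is exactly where $u\in H^4(\Omega)$ is used, via the trace theorem ($H^4(\Omega)\hookrightarrow C(\overline\Omega)$ and traces of $\partial_{xx}u,\partial_{yy}u$ lie in $H^{3/2}$ of each edge), and where the a.e.\ formulation in the definitions of $\D(A)$ and $\D(B)$ must be reconciled with the pointwise/continuous formulation coming from Sobolev embedding. I would also remark that no compatibility conditions at the corners of $\Omega$ are needed here: the argument only uses vanishing of $u$ and $Lu$ on the open edges, and the fact that tangential derivatives of a function that is constant along an edge vanish along that edge. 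This is why the inclusion in Lemma~\ref{lem:ABL2} holds even though, as noted after Lemma~\ref{lem:L3}, $\D(L^3)$ fails to embed in $H^6(\Omega)$ in general.
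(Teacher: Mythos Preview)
Your proposal is correct and follows essentially the same route as the paper: use $\D(L^2)\subset H^4(\Omega)$ from Lemma~\ref{lem:L3} together with the Sobolev embedding $H^4(\Omega)\hookrightarrow \C^2(\overline{\Omega})$ to make traces of $Au$ and $Bu$ meaningful, observe that the tangential operator applied to $u$ vanishes on each edge (since $u\equiv 0$ there), and then combine this with $Lu|_{\partial\Omega}=0$ to conclude $Au|_{\partial\Omega}=Bu|_{\partial\Omega}=0$, i.e.\ $Au,Bu\in\D(A)\cap\D(B)$. Your write-up is a bit more explicit about the trace-theoretic justification and the separate verification of each of the four domain inclusions, but the argument is the same.
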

\begin{proof}
 From  Lemma \ref{lem:L3} we have
\begin{equation*}
\D(L^2) \subseteq\left\{ u\in H^4(\Omega)\cap H_0^1(\Omega)\,;\, Lu\vert_{\partial\Omega} = 0\right\}.
\end{equation*}
Let $u\in \D(L^2)$. Then $Au,Bu \in H^2(\Omega)$.  As $\Omega \subset \mathbb{R}^2$, we have $H^4(\Omega)\cap  H_0^1(\Omega) \subset \C^2(\overline{\Omega})\cap H_0^1(\Omega)$ using standard Sobolev embeddings results. Hence as $u\in H^4(\Omega)\cap H_0^1(\Omega)$  we have for $x_0,y_0 \in \{0,1\}$
\begin{equation}\label{eq:ABr}
\begin{aligned}
& A u(x,y)\vert_{y = y_0} = \partial_x\big(a(x,y_0)\partial_x)u(x,y_0) = 0,\\ &B u(x,y)\vert_{x = x_0} = \partial_y\big(b(x_0,y)\partial_y)u(x_0,y) = 0.
\end{aligned}
\end{equation}
Thus, as $0 = Lu\vert_{\partial\Omega} = Au\vert_{\partial\Omega}  + Bu \vert_{\partial\Omega}$ we have by using \eqref{eq:ABr} $Au\vert_{\partial\Omega} = Bu\vert_{\partial\Omega}  = 0$, i.e., $Au,Bu \in \D(A)\cap\D(B)$.
\end{proof}
\begin{lem}\label{lem:dd}
Let $f \in \D(L^2)\subseteq \D(AB)\cap \D(BA)$. Then for every $\lambda > 0$ the solution $u$ to the elliptic problem
\begin{equation}\label{eq:elliA}
\begin{aligned}
& (\lambda I - A) u = f \;\text{ in } \Omega,\\
& u(0,y) = u(1,y) = 0 \;\text{ for almost every } y \in (0,1)
\end{aligned}
\end{equation}
lies in $\D(AB)\cap\D(BA)$.
\end{lem}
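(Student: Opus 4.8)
The plan is to write $u=(\lambda I-A)^{-1}f$ and to exploit two structural facts. First, $A$ differentiates only in $x$, so that for almost every fixed $y$ the function $u(\cdot,y)$ solves the one-dimensional elliptic problem $(\lambda I-A_y)u(\cdot,y)=f(\cdot,y)$ with zero Dirichlet data, where $A_y v=\partial_x(a(\cdot,y)\partial_x v)$; and $(\lambda I-A_y)^{-1}$ maps $L^2(0,1)$ into $H^2(0,1)\cap H^1_0(0,1)$ with a bound uniform in $y$. Second, by Lemmas~\ref{lem:L3} and~\ref{lem:ABL2} the datum is very regular: $f\in H^4(\Omega)$, $f\in\D(A^2)\cap\D(B^2)$ (hence $\partial_y f,\partial_{yy}f\in L^2(\Omega)$), and $f\vert_{\partial\Omega}=0$. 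I would first reduce the two claims: since $u\in\D(A)$ holds automatically and $Au=\lambda u-f$, the inclusion $u\in\D(BA)$ is equivalent to $Au\in\D(B)$, hence---because $f\in\D(B)$---to $u\in\D(B)$; and $u\in\D(AB)$ means $u\in\D(B)$ together with $Bu\in\D(A)$. So it suffices to prove (i) $u\in\D(B)$ and (ii) $Bu\in\D(A)$.

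For (i) I would produce the missing $y$-regularity of $u$ by a difference-quotient estimate in $y$ (equivalently, by differentiating the $y$-parametrised family of one-dimensional problems). Writing $D_h^y$ for the difference quotient in $y$ and using the resolvent identity,
\begin{equation*}
D_h^y u(\cdot,y)=(\lambda I-A_{y+h})^{-1}\Big(D_h^y f(\cdot,y)+\tfrac1h(A_{y+h}-A_y)u(\cdot,y)\Big).
\end{equation*}
Here $\|D_h^y f\|\le\|\partial_y f\|$, and $\tfrac1h(A_{y+h}-A_y)u(\cdot,y)=\partial_x\big(\tfrac{a(\cdot,y+h)-a(\cdot,y)}{h}\,\partial_x u(\cdot,y)\big)$ is bounded in $L^2(0,1)$ by $\|u(\cdot,y)\|_{H^2}$ (smoothness of $a$), which is square-integrable in $y$ because $u\in\D(A)$; together with the uniform bound on the resolvents this makes $\|D_h^y u\|$ bounded uniformly in $h$, so $\partial_y u\in L^2(\Omega)$, and passing to the limit gives $\partial_y u(\cdot,y)=(\lambda I-A_y)^{-1}\big(\partial_y f(\cdot,y)+\partial_x(\partial_y a\,\partial_x u(\cdot,y))\big)$, whence $\partial_x\partial_y u,\partial_{xx}\partial_y u\in L^2(\Omega)$. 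Repeating the argument once more---now using $\partial_{yy}f\in L^2(\Omega)$---yields $\partial_{yy}u\in L^2(\Omega)$ with $\partial_{yy}u(\cdot,y)\in H^2(0,1)$ for a.e.\ $y$. Finally $f(x,0)=f(x,1)=0$ forces $u(\cdot,0)=(\lambda I-A_0)^{-1}0=0$ and similarly at $y=1$, so $u$ obeys the Dirichlet condition in $y$; this gives $u\in\D(B)$ and, as a byproduct, $\partial_x^i\partial_y^j u\in L^2(\Omega)$ for all $i,j\le 2$.

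For (ii) I would use the commutator identity $B(\lambda I-A)^{-1}=(\lambda I-A)^{-1}B-(\lambda I-A)^{-1}[A,B](\lambda I-A)^{-1}$, i.e.
\begin{equation*}
Bu=(\lambda I-A)^{-1}\big(Bf-[A,B]u\big).
\end{equation*}
As a differential operator $[A,B]=AB-BA$ has order three: the fourth-order part $ab\,\partial_{xx}\partial_{yy}$ cancels, leaving $2ab_x\,\partial_x\partial_{yy}-2a_yb\,\partial_{xx}\partial_y$ plus terms of order at most two, with coefficients built from derivatives of $a$ and $b$. Hence the regularity from step (i) gives $[A,B]u\in L^2(\Omega)$, while $Bf\in\D(A)\subset L^2(\Omega)$ because $f\in\D(AB)$, so the right-hand side lies in $L^2(\Omega)$ and $(\lambda I-A)^{-1}$ of it lies in $\D(A)$. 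It remains to identify this element with $Bu$: one checks, using again the $y$-regularity, that $Bu=b\,\partial_{yy}u+b_y\,\partial_y u$ has $\partial_x Bu,\partial_{xx}Bu\in L^2(\Omega)$ and vanishes on $\{x=0\}\cup\{x=1\}$ (since $u(0,y)=u(1,y)=0$ a.e., combined with the $y$-regularity, makes $\partial_y u$ and $\partial_{yy}u$ vanish there), after which uniqueness for the fibrewise elliptic problem gives $Bu=(\lambda I-A)^{-1}(Bf-[A,B]u)\in\D(A)$. Together with (i) this yields $u\in\D(AB)\cap\D(BA)$.

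The main obstacle is step (i): because $A$ has $y$-dependent coefficients it does \emph{not} commute with $\partial_y$, so the $y$-regularity of $u$ cannot be read off directly from $(\lambda I-A)^{-1}$, and one is forced to run the difference-quotient (or parametrised-elliptic-regularity) estimate keeping all constants uniform in the parameter $y$. A secondary nuisance is the bookkeeping of how much smoothness of $a,b$ and $f$ is actually consumed, and the promotion of the almost-everywhere boundary identities to genuine trace identities, which is exactly what makes the commutator manipulations in step (ii) legitimate.
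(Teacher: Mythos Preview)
Your argument is correct and follows essentially the same strategy as the paper, with two cosmetic differences. For part~(i), the paper simply cites \cite{OstS12} for the implication ``$f\in\D(B)\Rightarrow u\in\D(B)$'', whereas you spell out the difference-quotient argument explicitly; your version is exactly the kind of computation that lies behind that citation. For part~(ii), the paper works with the commutator of $\partial_y$ with the resolvent, namely
\[
\partial_y(\lambda I-A)^{-1}=(\lambda I-A)^{-1}\partial_y+(\lambda I-A)^{-1}\partial_x\bigl((\partial_y a)\partial_x\bigr)(\lambda I-A)^{-1},
\]
and iterates once to conclude $\partial_y u,\partial_{yy}u\in\D(A)$, hence $Bu=b_y\partial_y u+b\,\partial_{yy}u\in\D(A)$. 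You instead invoke the commutator $[A,B]$ of the full split operators. Note, however, that your own step~(i) already delivers the paper's route as a byproduct: the formula $\partial_y u(\cdot,y)=(\lambda I-A_y)^{-1}(\ldots)$ places $\partial_y u$ (and, after one more iteration, $\partial_{yy}u$) directly in $\D(A)$, so $Bu\in\D(A)$ follows immediately without the $[A,B]$ detour or the subsequent ``identification by uniqueness'' step. Your argument is therefore a little longer than necessary, but nothing is wrong.
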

\begin{proof}
For $u$ solving \eqref{eq:elliA} and $f\in\D(L^2)\subset \D(B)$ we have $u \in \D(B)$, see \cite{OstS12}. Thus, $Au = \lambda u - f\in \D(B)$, i.e., $ u\in \D(BA)$. Next we show that $u\in \D(AB)$. For this, we compute
\begin{equation}
B u = \partial_y (b \partial_y u ) = \partial_y b \partial_y \big((\lambda I-A)^{-1} f\big) + b \partial_{yy} \big((\lambda I-A)^{-1} f\big).
\end{equation}
The relation $\partial_y (\lambda I-A)^{-1}  = (\lambda I-A)^{-1} \partial_x\big((\partial_ya)\partial_x\big) (\lambda I-A)^{-1}$  (see \cite[Section 7]{OstS12})  shows that
$$
\partial_y (\lambda I-A)^{-1},\ \partial_{yy} (\lambda I-A)^{-1}\,:\, L^2(\Omega)\rightarrow \D(A)\subset L^2(\Omega)
$$
for smooth coefficients $a$. Consequently $Bu \in \D(A)$, which proves the Lemma.
\end{proof}
The above Lemma in particular implies that for all $\lambda>0$
$$
(\lambda I-A)^{-1}\big\vert_{\D(L^2)},\;(\lambda I-B)^{-1}\big\vert_{\D(L^2)}\,:\, \D(L^2)\subset L^2(\Omega) \rightarrow \D(AB)\cap \D(BA)\subset L^2(\Omega).
$$
Thus, by using Cauchy's integral formula we can conclude that
$$
\mathrm{e}^{tA}\big\vert_{\D(L^2)},\;\mathrm{e}^{tB}\big\vert_{\D(L^2)}\,:\, \D(L^2)\subset L^2(\Omega) \rightarrow \D(AB)\cap \D(BA)\subset L^2(\Omega)
$$
 uniformly in $t > 0$.
Hence, by the closed graph theorem  there exists a constant $C$ such that for any $h > 0$,  $\sup_{s,\tau\in [0,h]} \Vert [A,B] \mathrm{e}^{sA}\mathrm{e}^{\tau B} L^{-2}\Vert \leq C$. Furthermore, as for $0\leq \gamma < \frac54$, $\D(L^\gamma)$ is an invariant subspace under the semigroups generated by the split operators, we have for $0\leq\varepsilon < \frac{1}{4}$ that $\sup_{s,\tau\in [0,h]} \Vert L^{-1+\varepsilon } [A,B] \mathrm{e}^{sA}\mathrm{e}^{\tau B} L^{-1-\varepsilon }\Vert \leq C$  uniformly in $h > 0$ (for some constant depending on $\varepsilon$).

Next we study the regularity of the solution to our model problem~\eqref{eq:abM}. Its exact solution is given by the
variation-of-constants formula
\begin{equation*}
u(t) = \mathrm{e}^{tL}u_0 + \int_0^t \mathrm{e}^{(t-\tau)L}
g(\tau)\,\mathrm{d}\tau, \qquad 0 \leq t \leq T.
\end{equation*}
At time $t_{n+1} = t_n +h$, with a step size $h>0$, the solution can be rewritten as
\begin{equation*}
\begin{aligned}
u(t_{n+1}) = \mathrm{e}^{hL}u(t_n) + \int_0^h \mathrm{e}^{(h-s)L} g(t_n+s)\,\mathrm{d}s.
\end{aligned}
\end{equation*}
Taylor expansion of $g(t_n+s)$ at $t_n$ yields
 \begin{equation*}
u(t_{n+1}) =\mathrm{e}^{hL}u(t_n) + \int_0^h \mathrm{e}^{(h-s)L}\Big
(g(t_n) +  sg'(t_n) + \int_{t_n}^{t_n+s} (t_n+s - \tau)
g^{\prime\prime}(\tau)\,\mathrm{d}\tau\Big)\mathrm{d}s.
\label{exakt}
\end{equation*}
We introduce the following notation  (see \cite{HanO09}):
\begin{defn}\label{p1def1}
For $h>0$ we define the bounded operators $\lambda_j$ by setting
\begin{equation*}
\lambda_j = \frac{1}{h^j} \int_0^h \mathrm{e}^{(h-s)L}
\frac{s^{j-1}}{(j-1)!}\,\mathrm{d}s,\qquad j\geq 1
\end{equation*}
and $\lambda_0 = \mathrm{e}^{hL}$. We likewise define $\alpha_0 = \mathrm{e}^{hA},\, \beta_0 = \mathrm{e}^{hB}$ and for $j\geq 1$
\begin{equation*}
\alpha_j = \frac{1}{h^j} \int_0^h \mathrm{e}^{(h-s)A}
\frac{s^{j-1}}{(j-1)!}\,\mathrm{d}s,\qquad \beta_j = \frac{1}{h^j} \int_0^h \mathrm{e}^{(h-s)B}
\frac{s^{j-1}}{(j-1)!}\,\mathrm{d}s.
\end{equation*}
\end{defn}
 With   this notation the exact solution to \eqref{eq:abM} can be written as
\begin{equation}\label{eq:exactR3}
u(t_{n+1}) = \lambda_0 u(t_n) + h\lambda_1 g(t_n) + h^2 \lambda_2
g^{\prime}(t_n) +R_{3,n}(h),
\end{equation}
with the remainder
\begin{equation}\label{eq:R3}
R_{3,n}(h) =\int_0^h \mathrm{e}^{(h-s)L}\Big ( \int_{t_n}^{t_n+s}
(t_n+s - \tau)
g^{\prime\prime}(\tau)\,\mathrm{d}\tau\Big)\,\mathrm{d}s.
\end{equation}
We note for later use that following relations hold on $L^2(\Omega)$
\begin{equation}\label{p1rek}
\lambda_j = \frac{1}{j!} I + hL \lambda_{j+1}, \qquad j \geq 0,
\end{equation}
which can easily be seen by integration by parts.

We now state some well known regularity results for analytic semigroups, see \cite{Lun1995} for further details. This allows us to play back the regularity assumptions on the exact solution to regularity assumptions on the initial data. For some $0\leq \gamma\leq 1$ and $\theta >0$ let the data of problem \eqref{eq:abM} satisfy
\begin{equation}\label{p1eq:reg-data2}
u_0 \in \D(L^\gamma), \qquad Lu_0 + g(0) \in \D(L^\gamma),
\qquad g \in \mathcal{C}^{1+\theta} ([0,T]; L^2(\Omega)),
\end{equation}
then the solution of \eqref{eq:abM} possesses the regularity
\begin{equation}\label{p1eq:regularity}
u\in\mathcal{C}^2([0,T];L^2(\Omega))\cap \mathcal{C}^1([0,T];\D(L^\gamma)).
\end{equation}
Note that if we consider second-order methods, we need more regularity of the solution. For analytic semigroups, this requirement can again be easily expressed in terms of the data. If, for some $0\leq \gamma\leq 1$ and some $\theta > 0$, the data satisfies
\begin{equation}
\begin{aligned}\label{eq:reg-data3}
&g\in \mathcal{C}^{2+\theta}([0,T];L^2(\Omega)),\\
&u_0 \in \D(L^\gamma),\quad Lu_0 + g(0)\in \D(L^\gamma),\quad L^2u_0 + Lg(0) + g'(0)\in \D(L^\gamma),
\end{aligned}
\end{equation}
then the exact solution of the evolution
equation \eqref{eq:abM} satisfies
\begin{equation}\label{eq:high-reg}
u \in \mathcal{C}^{3}([0,T];L^2(\Omega))\cap \mathcal{C}^{2}([0,T];\D(L^\gamma)).
\end{equation}

%%%%%%%%%%%%%%%%%%%%%%%
\section{Convergence results}\label{sec:conv}
%%%%%%%%%%%%%%%%%%%%%%%

In order to state our sharp convergence results we need the following lemma about the smoothing property of splitting methods recently derived in \cite{OstS12}.
\begin{lem}\label{lem:smoothDirLie}
For $0\leq \alpha <1$, the following smoothing properties hold
\begin{equation*}
\begin{aligned}
&\Vert (-L)^{\alpha}\big(\mathrm{e}^{hA}\mathrm{e}^{hB}\big)^n \Vert \leq C t_n^{-\alpha},\qquad 0<t_n=nh\le T,\\
&\Vert (-L)^{\alpha}\big(\mathrm{e}^{\frac{h}{2}A}\mathrm{e}^{hB}\mathrm{e}^{\frac{h}{2}A}\big)^n \Vert \leq C t_n^{-\alpha},\qquad 0<t_n=nh\le T,
\end{aligned}
\end{equation*}
where the constant C can be chosen uniformly on $[0,T]$ and, in particular,
independently of $n\ge 1$ and $h>0$.
\end{lem}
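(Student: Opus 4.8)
The statement is a known smoothing result quoted from \cite{OstS12}, so a self-contained proof is not expected; what I would supply is a sketch of how such bounds are obtained, since the mechanism is instructive and the paper leans heavily on it. The key point is that each one-step operator $S_h := \mathrm{e}^{hA}\mathrm{e}^{hB}$ (respectively the symmetric version $\mathrm{e}^{hA/2}\mathrm{e}^{hB}\mathrm{e}^{hA/2}$) is a product of contractions on $L^2(\Omega)$, hence $\|S_h^n\|\le 1$ for all $n$, so the only thing to prove is the gain of the negative power $t_n^{-\alpha}$, i.e. that $S_h^n$ maps $L^2$ into $\D((-L)^\alpha)$ with the stated norm. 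Since $(\D(L),L)$ generates an analytic semigroup of contractions, we have the comparison $\|(-L)^\alpha \mathrm{e}^{tL}\|\le C t^{-\alpha}$; the substance of the lemma is that $S_h^n$ behaves like $\mathrm{e}^{t_n L}$ as far as smoothing is concerned, uniformly in $h$.

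**Key steps.** First I would record the elementary facts: $\mathrm{e}^{hA}$, $\mathrm{e}^{hB}$ are analytic contraction semigroups on $L^2(\Omega)$ (stated in the excerpt, from \cite{OstS12}), so $S_h$ is a contraction and $\|(-A)^\alpha \mathrm{e}^{hA}\|\le C h^{-\alpha}$, likewise for $B$. Second, I would reduce the claim to a resolvent estimate: it suffices to bound $\|(-L)^\alpha (\lambda I - \Delta_h)^{-1}\|$ where $\Delta_h = h^{-1}(S_h - I)$ is the "discrete generator", uniformly for $\lambda$ in a sector, and then recover $S_h^n = \frac{1}{2\pi i}\oint (1+h\lambda)^n (\lambda I - \Delta_h)^{-1}\,\mathrm{d}\lambda$ by a Cauchy/Dunford calculus argument, choosing the contour at distance $\sim t_n^{-1}$ from the origin so that $|1+h\lambda|^n \le e^{t_n \mathrm{Re}\,\lambda} \le C$ and $|\lambda|^{-\alpha}\sim t_n^{\alpha}$ is picked up on integration. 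Third — and this is where the work really sits — I would establish that $S_h$ is "close to" $\mathrm{e}^{hL}$ in the appropriate sense: write $S_h = \mathrm{e}^{hL} + h^2 E_h$ where the error operator $E_h$ comes from the Baker–Campbell–Hausdorff / Taylor expansion and contains the commutator $[A,B]$, and show that $E_h$ is $L$-bounded with a constant uniform in $h$ after multiplying by suitable negative powers of $L$ — precisely the type of estimate $\sup_{s,\tau\in[0,h]}\|L^{-1+\varepsilon}[A,B]\mathrm{e}^{sA}\mathrm{e}^{\tau B}L^{-1-\varepsilon}\|\le C$ derived at the end of the Analytic framework section, which is exactly tailored to control this term. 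A telescoping/discrete-Gronwall argument then transfers the continuous smoothing bound for $\mathrm{e}^{t_n L}$ to $S_h^n$.

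**Main obstacle.** The hard part is the uniformity in $h$ of the resolvent (or stability) estimate for the discrete generator $\Delta_h$ near the origin of the spectrum: the individual factors $\mathrm{e}^{hA}$, $\mathrm{e}^{hB}$ are contractions but their product is not self-adjoint, and the naive bound $\|S_h^n\|\le 1$ gives no smoothing, so one must genuinely exploit analyticity of the composed semigroup in the sector, which requires a careful commutator analysis on the corner domain $\Omega=(0,1)^2$ where the operators $A$, $B$ do not commute and the domains $\D(A)$, $\D(B)$, $\D(L)$ only match up in the compatibility sense of Lemmas~\ref{lem:L3}–\ref{lem:dd}. I would flag that the full argument is carried out in \cite{OstS12} and that, for the present paper, the statement of Lemma~\ref{lem:smoothDirLie} is all that is needed downstream; the proof reduces to citing that reference after noting the contraction property and the sectoriality of $L$, $A$, $B$ established there.
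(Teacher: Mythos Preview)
Your proposal is correct and matches the paper's treatment: the paper does not prove this lemma but simply states it as a result ``recently derived in \cite{OstS12}'', and you correctly identify that citing that reference, together with the contraction and sectoriality facts already recorded in the Analytic framework section, is all that is required here. Your sketch of the underlying mechanism (resolvent bounds for the discrete generator, commutator control, Dunford calculus) is reasonable extra context but goes beyond what the paper itself provides.
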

Now we are able to state the main convergence results for the exponential Lie and Strang splitting, see Sections \ref{sec:convLie} and \ref{sec:convStrang}, respectively.

%%%%%%%%%%%%%%%%%%%%%%%
\subsection{The exponential Lie splitting}\label{sec:convLie}
%%%%%%%%%%%%%%%%%%%%%%%

As a numerical scheme, we now consider the exponential Lie splitting method for inhomogeneous equations, i.e., the numerical approximation to the exact solution at time $t_{n+1} = t_n+h$ is given by
\begin{equation}\label{eq:inLie}
u_{n+1} = \mathrm{e}^{hA}\mathrm{e}^{hB}\big(u_n + hg(t_n)\big),
\end{equation}
 and initial condition $u_0 = u(0)$.
Under natural regularity assumptions in time and space on the inhomogeneity  $g$ we obtain first-order convergence.
\begin{thm}\label{thm:inhLie}
The global error of the exponential Lie splitting method \eqref{eq:inLie} satisfies the following bound:   For all fixed $T$ and $\gamma > 0$,   there exists a constant $C$ such that for all $h > 0$, we have
\begin{equation}\label{eq:errglLie}
\Vert u(t_{n})-u_n\Vert \leq C h \sup_{t\in[0,T]}\Big( \Vert L^\gamma u'(t)\Vert + \Vert L^\gamma g(t)\Vert  + \Vert g'(t)\Vert\Big) 
\end{equation}
for all $n$ such that $t_n \leq T$, where $u(t)$ is the exact solution of  \eqref{eq:abM}.
\end{thm}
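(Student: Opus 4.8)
The plan is to use the standard Lady Windermere's fan decomposition of the global error, combining the local error bound at each step with the stability (smoothing) estimate from Lemma~\ref{lem:smoothDirLie}. First I would write the exact solution over one step via the variation-of-constants formula \eqref{eq:exactR3}, $u(t_{n+1}) = \lambda_0 u(t_n) + h\lambda_1 g(t_n) + \mathcal{O}(h^2)$, and compare it with one step of the numerical scheme $u_{n+1} = \mathrm{e}^{hA}\mathrm{e}^{hB}(u_n + h g(t_n))$. Defining the local error $\delta_{n+1}$ as the defect obtained by inserting the exact solution into the scheme, $\delta_{n+1} = u(t_{n+1}) - \mathrm{e}^{hA}\mathrm{e}^{hB}\big(u(t_n) + h g(t_n)\big)$, the global error $e_n = u(t_n) - u_n$ telescopes into $e_n = \sum_{j=1}^{n} \big(\mathrm{e}^{hA}\mathrm{e}^{hB}\big)^{n-j}\delta_j$.

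The next step is to estimate the local error $\delta_{n+1}$. Splitting it into the contribution from the homogeneous part, $\big(\mathrm{e}^{hL} - \mathrm{e}^{hA}\mathrm{e}^{hB}\big) u(t_n)$, and from the inhomogeneity, $h\lambda_1 g(t_n) - h\,\mathrm{e}^{hA}\mathrm{e}^{hB} g(t_n) + R_{3,n}(h)$, one treats each term separately. For the homogeneous part, the operator identity $\mathrm{e}^{hL} - \mathrm{e}^{hA}\mathrm{e}^{hB} = -\int_0^h \mathrm{e}^{(h-s)L}[\cdot]\,\mathrm{d}s$ produced by differentiating $s\mapsto \mathrm{e}^{(h-s)L}\mathrm{e}^{sA}\mathrm{e}^{sB}$ yields, after a further integration, a representation whose leading term involves the commutator $[A,B]$ sandwiched between semigroup operators. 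Here the key input is the bound derived in the analytic-framework section, $\sup_{s,\tau\in[0,h]}\Vert L^{-1+\varepsilon}[A,B]\mathrm{e}^{sA}\mathrm{e}^{\tau B}L^{-1-\varepsilon}\Vert \le C$ for $0\le\varepsilon<\tfrac14$; this lets us write the homogeneous local error as $h^2$ times something controlled, but with two unbounded powers of $L$ split off -- one power $L^{1-\varepsilon}$ remaining unbounded to the left, which will be absorbed by smoothing, and one power $L^{1+\varepsilon}$ applied to $u(t_n)$. For the inhomogeneous part, using the recursion \eqref{p1rek} one sees $h\lambda_1 - h\,\mathrm{e}^{hA}\mathrm{e}^{hB} = h(\mathrm{e}^{hL} - \mathrm{e}^{hA}\mathrm{e}^{hB}) + h^2 L\lambda_2$, so modulo the already-treated homogeneous-type defect one is left with terms of the form $h^2 L\lambda_2 g(t_n)$ and $R_{3,n}(h) = \mathcal{O}(h^2)$; the term $h^2 L\lambda_2 g(t_n)$ carries one unbounded power of $L$ hitting $g(t_n)$, which is why a factor $L^\gamma g$ appears in the bound while the other $L^{1-\gamma}$ is absorbed by smoothing. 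To make this rigorous without full regularity of $g$ one uses $h^2 L\lambda_2 = h\int_0^h (\text{bounded})\,\mathrm{d}s$ combined with $\Vert L^{1-\gamma}\lambda_2 \Vert \le C$ (parabolic smoothing of $\lambda_2$) and keeps only $L^\gamma g(t_n)$.

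Finally I would assemble the pieces. Each local error splits as $\delta_j = h^2\, L^{1-\gamma}_{\mathrm{(left)}} v_j$ with $\Vert v_j\Vert \le C\big(\Vert L^\gamma u'(t_{j-1})\Vert + \Vert L^\gamma g(t_{j-1})\Vert + \Vert g'(t_{j-1})\Vert\big)$ -- after also replacing differences like $u(t_j)-u(t_{j-1})$ by $h$ times $u'$ and rewriting $L u(t_{j-1}) = u'(t_{j-1}) - g(t_{j-1})$ so that no bare $L^{1+\gamma}u$ appears, only $L^\gamma u'$ and $L^\gamma g$. Inserting into the telescoped sum and applying Lemma~\ref{lem:smoothDirLie} with $\alpha = 1-\gamma<1$ gives $\Vert e_n\Vert \le C h^2 \sum_{j=1}^{n} (t_{n-j}+h)^{-(1-\gamma)} \sup_{[0,T]}(\cdots)$ (the first step $j=n$ needs the estimate at $t_0$, handled by the $+h$ regularization or directly since $\delta_1$ has the extra boundedness), and the Riemann sum $h\sum_j (t_{n-j}+h)^{-(1-\gamma)} \le C\int_0^T \sigma^{-(1-\gamma)}\,\mathrm{d}\sigma = C\,T^{\gamma}/\gamma$ is finite because $1-\gamma<1$. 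This yields the claimed $\mathcal{O}(h)$ bound. The main obstacle is the bookkeeping of which powers of $L$ are split to the left (to be eaten by smoothing, costing a negative power of $t_n$ that is still summable) versus which act on the data $u'$ and $g$ -- together with the fact that $\gamma$ may be arbitrarily small, so one must be careful that exactly $1-\gamma<1$ powers, and no more, are left for the smoothing lemma to absorb.
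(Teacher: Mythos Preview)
Your strategy is sound and would prove the theorem, but the paper takes a more algebraic route. Instead of unpacking the commutator integral for $\e^{hL}-\e^{hA}\e^{hB}$, the paper imports the identity $I\big\vert_{\D(L)} = S(I-hL) + h^2\mathcal{K}_2$ (with $h\Vert\mathcal{K}_2L^{-1}\Vert\le C$) from the homogeneous analysis in \cite{HanO09} as a black box: after rewriting $\lambda_1=\tfrac1h(\lambda_0-I)L^{-1}$ and using the equation in the form $u(t_n)+L^{-1}g(t_n)=L^{-1}u'(t_n)$, this identity is inserted in front of the resulting $L^{-1}$ factors and yields directly
\[
e_{n+1} = Se_n + h^2 S(\lambda_1-\lambda_2)Lu'(t_n) - h^2\mathcal{K}_2\lambda_0 L^{-1}u'(t_n) + h^2\mathcal{K}_2L^{-1}g(t_n) - R_{2,n}(h).
\]
The smoothing/Riemann-sum endgame is then exactly yours. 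Your route makes the role of $[A,B]$ explicit, but costs an extra step you glossed over: the Duhamel integral produces a factor $\e^{\sigma A}$ to the \emph{left} of $[A,B]$, so applying the stated bound on $L^{-1+\varepsilon}[A,B]\e^{sA}\e^{\tau B}L^{-1-\varepsilon}$ also requires boundedness of $L^{-(1-\varepsilon)}\e^{\sigma A}L^{1-\varepsilon}$, which follows by duality from domain invariance and self-adjointness of $A$ but should be said.

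Two small slips. The recursion \eqref{p1rek} gives $\lambda_1 = I + hL\lambda_2$, hence $h\lambda_1 - hS = h(I-S) + h^2L\lambda_2$, not $h(\e^{hL}-S) + h^2L\lambda_2$; writing $I-S = -hL\lambda_1 + (\e^{hL}-S)$ recovers your intended decomposition with one additional term $-h^2L\lambda_1 g(t_n)$ handled just like $h^2L\lambda_2 g(t_n)$. And since only $g'$ enters the final bound, the appropriate Taylor remainder is $R_{2,n}(h)=\int_0^h\e^{(h-s)L}\int_{t_n}^{t_n+s}g'(\tau)\,\dd\tau\,\dd s$, not $R_{3,n}$.
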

We recall that the domain $\D(L^\gamma)$ is free of boundary conditions for $\gamma < \frac14$. Thus full-order convergence of the exponential Lie splitting only requires some additional smoothness in space of the inhomogeneity $g$.

Also note that if for some $0\leq \gamma\leq 1$ and $\theta>0$, $u_0 \in \D(L^{1+\gamma})$, $g(0)\in \D(L^\gamma)$ and $g\in \mathcal{C}^{1+\theta}([0,T];L^2(\Omega))$, then the exact solution of problem \eqref{eq:abM} will satisfy the regularity assumption \eqref{p1eq:regularity}, in particular $\sup_{t\in[0,T]} \Vert L^\gamma u'(t)\Vert \leq C$.

\begin{proof}
In the following we will denote by $S$ the exponential Lie splitting, i.e., $S = \mathrm{e}^{hA}\mathrm{e}^{hB}$.
The exact solution of \eqref{eq:abM} is given by
\begin{equation}\label{exli}
u(t_{n+1}) = \lambda_0 u(t_n) + h\lambda_1 g(t_n) + R_{2,n}(h),
\end{equation}
where we have set
\begin{equation}\label{p1eq:R2}
R_{2,n}(h) = \int_0^h \mathrm{e}^{(h-s)L} \Big ( \int_{t_n}^{t_n+s}
g^{\prime}(\tau)\,\mathrm{d}\tau\Big)\,\mathrm{d}s.
\end{equation}
In order to prove convergence of the method, we first take a look at the error $e_{n+1} = u_{n+1}- u(t_{n+1})$. Subtracting \eqref{exli} from \eqref{eq:inLie} leads to
\begin{equation}
\begin{aligned}
e_{n+1} = Se_n + (S-\lambda_0)u(t_n) + h(S-\lambda_1) g(t_n)-R_{2,n}(h).
\end{aligned}
\end{equation}
Using the recurrence relation \eqref{p1rek}, namely $\lambda_1 = \sfrac{1}{h}(\lambda_0 - I)$, as well as the equation in the form $u(t_n)+L^{-1}g(t_n) = L^{-1}u'(t_n)$, we obtain
\begin{equation}\label{eq:ei}
\begin{aligned}
e_{n+1} = Se_n + S\left(u(t_n)+hg(t_n)\right)-\lambda_0 L^{-1}g(t_n)+L^{-1}g(t_n)-R_{2,n}(h).
\end{aligned}
\end{equation}
The convergence analysis of the exponential Lie splitting for the homogeneous problem stated in \cite[proof of Thm.\;3.4]{HanO09} allows us to express the identity operator $I$ restricted to the domain $\D(L)$ as
\begin{equation}\label{eq:idLie}
I\big\vert_{\D(L)}= S(I-hL) + h^2\mathcal{K}_2,
\end{equation}
where  the operator $\mathcal{K}_2$ is such that $h\Vert \mathcal{K}_2L^{-1}\Vert \leq C$ for some constant independent of $h$.   Inserting the representation \eqref{eq:idLie} of the identity operator in front of the terms with $L^{-1}$ in \eqref{eq:ei} yields
\begin{equation*}
\begin{aligned}
e_{n+1} =& Se_n + S\left(u(t_n)+hg(t_n)\right)-I \lambda_0 L^{-1}g(t_n)+I L^{-1}g(t_n)-R_{2,n}(h)\\
=& S e_n + S (I-\lambda_0 + hL\lambda_0) L^{-1}u'(t_n) - h^2\mathcal{K}_2\lambda_0 L^{-1}u'(t_n) + h^2\mathcal{K}_2L^{-1}g(t_n)-R_{2,n}(h).
\end{aligned}
\end{equation*}
Using once more the recurrence relation \eqref{p1rek} we obtain
\begin{equation}\label{eq:localErrLie}
e_{n+1} = Se_n + h^2 S(\lambda_1-\lambda_2)Lu'(t_n)- h^2\mathcal{K}_2\lambda_0 L^{-1}u'(t_n) + h^2\mathcal{K}_2L^{-1}g(t_n)-R_{2,n}(h).
\end{equation}
Solving the above error recursion we obtain with the aid of Lemma \ref{lem:smoothDirLie} for $0<\gamma<1$
\begin{equation}\label{err:lie}
\begin{aligned}
\Vert e_{n+1} &\Vert \leq h^2\sum_{k=0}^{n-1}\Vert S^{n-k} L^{1-\gamma}\Vert \Vert L^{-1+\gamma}J_k\Vert + h^2\Vert J_n\Vert + \sum_{k=0}^{n-k}\Vert S^{n-k}\Vert \Vert R_{2,k}(h)\Vert\\
&\leq Ch\sum_{k=0}^{n-1}\frac{h}{((n-k)h)^{1-\gamma}} \Vert L^{-1+\gamma}J_k\Vert + h^2\Vert J_n\Vert + \sum_{k=0}^{n-k} \Vert R_{2,k}(h)\Vert,
\end{aligned}
\end{equation}
where we have set
\begin{equation*}
J_k = S(\lambda_1-\lambda_2)Lu'(t_k)- \mathcal{K}_2\lambda_0 L^{-1}u'(t_k) + \mathcal{K}_2L^{-1}g(t_k).
\end{equation*}
Using the domain invariance of $\D(L^{1-\gamma})$ for $0<\gamma<1$ under the semigroups, we have
\begin{equation*}
\Vert L^{-1+\gamma} J_k\Vert \leq C \Big( \Vert L^\gamma u'(t_k)\Vert + \Vert L^\gamma g(t_k)\Vert\Big).
\end{equation*}
Note that $h\Vert \mathcal{K}_2L^{-1}\Vert$ is bounded and thus $h^2\Vert J_n\Vert \leq Ch$. Furthermore    $\Vert R_{2,k}(h)\Vert\leq Ch^2\mathrm{sup}_{t\in[0,T]}\Vert g'(t)\Vert$ ,  see \eqref{p1eq:R2}.   Thus, by estimating the sum in \eqref{err:lie} by its Riemann integral, we obtain the desired convergence result \eqref{eq:errglLie}.
\end{proof}

%%%%%%%%%%%%%%%%%%%%%%%
\subsection{The exponential Strang splitting}\label{sec:convStrang}
%%%%%%%%%%%%%%%%%%%%%%%

In this section we are interested in applying a second-order exponential splitting method to the inhomogeneous problem \eqref{eq:abM}. As numerical approximation $u_{n+1}$ to the exact solution $u(t_{n+1})$ we choose the following scheme
\begin{equation}\label{eq:inStrang}
u_{n+1} = \mathrm{e}^{\frac{h}{2}A}\mathrm{e}^{\frac{h}{2}B}\left(\mathrm{e}^{\frac{h}{2}B}\mathrm{e}^{\frac{h}{2}A}u_n + h g\big(t_n+\sfrac{h}{2}\big)\right).
\end{equation}
Clearly there are many possible choices of applying second-order exponential splitting methods to inhomogeneous problems, such as for instant the scheme
\begin{equation}\label{eq:inStrangB}
u_{n+1}  = \mathrm{e}^{\frac{h}{2}A}\mathrm{e}^{hB}\mathrm{e}^{\frac{h}{2}A}\left( u_n + \sfrac{h}{2} g(t_n)\right) + \sfrac{h}{2}g(t_n + h),
\end{equation}
which we will call Strang B splitting in the following. The main motivation behind our choice \eqref{eq:inStrang} lies in the crucial  fact that for $u_0 \in \D(L)$ the numerical solution $u_{n+1}$ remains in the domain $\D(L)$ for every $n\geq 0$, which can be seen from the estimate
\begin{equation*}
\begin{aligned}
\Vert L u_{n+1} \Vert &\leq\big \Vert L\mathrm{e}^{\frac{h}{2}A}\mathrm{e}^{hB}\mathrm{e}^{\frac{h}{2}A} L^{-1}\big\Vert \Vert L u_{n}\Vert + \big\Vert h L \mathrm{e}^{\frac{h}{2}A}\mathrm{e}^{\frac{h}{2}B}\big\Vert \Vert g\big(t_n+\sfrac{h}{2}\big)\Vert\\&
\leq C \Big(\Vert L u_n\Vert +  \Vert g\big(t_n+\sfrac{h}{2}\big)\Vert\Big)
\end{aligned}
\end{equation*}
that uses the domain invariance of $\D(L)$ under the semigroups. In contrast, the numerical solution of Strang B \eqref{eq:inStrangB} remains in the domain $\D(L)$ only under the unnatural assumption that $g(t)\in \D(L)$ for all $t\geq 0$. The favorable choice of \eqref{eq:inStrang} is also confirmed in the numerical experiments given in Section \ref{num}.

\begin{thm}\label{thm:inhStrang}
The global error of the exponential Strang splitting method \eqref{eq:inStrang}  satisfies the following bound:
  For all fixed $T$, $\gamma > 0$,   and $\nu$ with $0 \leq \nu \leq 1$, there exists a constant $C$ such that for all $h > 0$, we have
\begin{equation}\label{thm:glerrSt}
\Vert u(t_n) - u_n\Vert \leq C h^{1+\nu}\sup_{t\in [0,T]}\Big( \Vert L^\gamma u''(t)\Vert + \Vert L^{\gamma + \nu} g(t)\Vert + \Vert L^{\gamma} g'(t)\Vert   + \Vert g''(t)\Vert\Big)
\end{equation}
for all $n$ such that $t_n \leq T$, where $u(t)$ is the exact solution of  \eqref{eq:abM}.
\end{thm}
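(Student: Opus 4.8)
The plan is to mirror the structure of the proof of Theorem \ref{thm:inhLie}, but now carrying the expansion one order further and exploiting the smoothing estimate in the Strang form, i.e. the second inequality of Lemma \ref{lem:smoothDirLie} together with the symmetric substep $\mathrm{e}^{\frac h2 A}\mathrm{e}^{hB}\mathrm{e}^{\frac h2 A}$. First I would write the exact solution via the three-term expansion \eqref{eq:exactR3}, $u(t_{n+1}) = \lambda_0 u(t_n) + h\lambda_1 g(t_n) + h^2\lambda_2 g'(t_n) + R_{3,n}(h)$, but since the inhomogeneity in the scheme \eqref{eq:inStrang} is evaluated at the midpoint $t_n+\tfrac h2$, it is more convenient to Taylor-expand $g$ around $t_n+\tfrac h2$ instead; then the first-order term $\int_0^h \mathrm{e}^{(h-s)L}(s-\tfrac h2)\,\mathrm{d}s\, g'(t_n+\tfrac h2)$ is already $O(h^2)$ as an operator coefficient acting on $g'$, which is what ultimately allows the $g'$-contribution to the error to carry only $L^\gamma$ (not $L^{\gamma+\nu}$) in \eqref{thm:glerrSt}. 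The remainder then contributes $Ch^3\sup\|g''\|$ per step, hence $Ch^2\sup\|g''\|$ globally, which is dominated by $Ch^{1+\nu}$ for $\nu\le 1$.

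Next I would subtract the scheme from the exact recursion to get $e_{n+1} = \TS e_n + (\text{local defect})_n$, where $\TS = \mathrm{e}^{\frac h2 A}\mathrm{e}^{\frac h2 B}\mathrm{e}^{\frac h2 B}\mathrm{e}^{\frac h2 A}$; note $\TS$ is conjugate to $\mathrm{e}^{\frac h2 A}\mathrm{e}^{hB}\mathrm{e}^{\frac h2 A}$ via $\mathrm{e}^{\frac h2 A}$, so Lemma \ref{lem:smoothDirLie} applies to powers of $\TS$ up to a harmless factor. The local defect splits into a homogeneous part, coming from $\TS u(t_n) - \lambda_0 u(t_n)$, and an inhomogeneous part, $h\,\mathrm{e}^{\frac h2 A}\mathrm{e}^{\frac h2 B} g(t_n+\tfrac h2) - h\lambda_1 g(t_n) - h^2\lambda_2 g'(t_n)$. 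For the homogeneous part I would invoke the known second-order expansion of the Strang identity on $\D(L^2)$ from \cite{HanO09}, writing $I|_{\D(L^2)} = \TS(I - hL + \tfrac{h^2}{2}L^2) + h^3\mathcal{K}_3$ with $h\|\mathcal{K}_3 L^{-2}\|\le C$; combined with the equation in the differentiated form $L^{-2}u''(t_n) = u(t_n) + L^{-1}g(t_n) + L^{-2}g'(t_n)$ (obtained by differentiating $u' = Lu + g$), this reduces the homogeneous defect to terms of the shape $h^3 S (\text{bounded})\, L u''$ plus $h^3\mathcal{K}_3 L^{-2}(\dots)$, each of size $h^3$ times $\|L^\gamma u''\| + \|L^\gamma g'\| + \|L^{\gamma}g\|$ after one application of the $L^{1-\gamma}$-smoothing. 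Crucially the commutator bounds collected after Lemma \ref{lem:dd} — namely $\sup_{s,\tau}\|L^{-1+\varepsilon}[A,B]\mathrm{e}^{sA}\mathrm{e}^{\tau B}L^{-1-\varepsilon}\|\le C$ — are what make the $\tfrac{h^2}{2}L^2$ correction term legitimate on $\D(L^{2})$, and this is where Lemma \ref{lem:ABL2} enters.

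For the inhomogeneous part I would expand $\mathrm{e}^{\frac h2 A}\mathrm{e}^{\frac h2 B} = I + O(h) L$-type corrections, match against $\lambda_1 = I + O(h)L\lambda_2$ and $\lambda_2 = \tfrac12 I + O(h)L\lambda_3$ from \eqref{p1rek}, and expand $g(t_n+\tfrac h2) = g(t_n) + \tfrac h2 g'(t_n) + O(h^2)g''$; the $I$-level and $g'$-level terms cancel by the choice of midpoint evaluation, leaving a defect of the form $h^2\, (\text{bounded operator})\, L\, g(t_n+\tfrac h2) + h^2(\text{bdd})\,g'(t_n) + O(h^3)g''$. This is the step that produces the characteristic $L^{\gamma+\nu}g$ term: the leftover $h^2 L g$ contribution is summed against $\TS^{n-k}$, and here one has a genuine choice — either bound $\TS^{n-k}L g$ using no smoothing (costing a factor $\|L^{1+?}g\|$ which is too much) or, to interpolate, bound $\|\TS^{n-k} L^{1-\nu}\|\le C t_{n-k}^{-(1-\nu)}$ by Lemma \ref{lem:smoothDirLie} and leave $L^{\nu}g$, so that after one more $L^\gamma$ of spatial smoothing room we get $\|L^{\gamma+\nu}g\|$ with the discrete sum $h\sum_k t_{n-k}^{-(1-\nu)}$ converging to a finite Riemann integral (here $0\le\nu\le1$ is exactly the range for which $t^{-(1-\nu)}$ is integrable). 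Assembling: the homogeneous defect gives $Ch^2$ times $(\|L^\gamma u''\| + \|L^\gamma g'\| + \|L^\gamma g\|)$ globally, the inhomogeneous leftover gives $Ch^{1+\nu}\|L^{\gamma+\nu}g\|$, and the $g''$-remainders give $Ch^2\|g''\|$, and since $\nu\le1$ all the $h^2$-terms are bounded by $h^{1+\nu}$, yielding \eqref{thm:glerrSt}.

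The main obstacle I expect is the bookkeeping in the inhomogeneous defect: one must expand $\mathrm{e}^{\frac h2 A}\mathrm{e}^{\frac h2 B}$ and the $\lambda_j$'s to sufficient order while keeping every error term in a form where exactly one power of $L$ (no more) is left uncontrolled, so that the interpolated smoothing $\|\TS^{n-k}L^{1-\nu}\|\le Ct_{n-k}^{-(1-\nu)}$ suffices; getting the cancellation of the $O(1)$ and $O(h)$ pieces to be exact — rather than merely $O(h)$ and $O(h^2)$ — relies on the midpoint rule $g(t_n+\tfrac h2)$ and on the precise recurrence \eqref{p1rek}, and any slack there would degrade the bound. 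The secondary technical point is justifying the second-order Strang identity $I|_{\D(L^2)} = \TS(I - hL + \tfrac{h^2}{2}L^2) + h^3\mathcal{K}_3$ with the stated bound on $\mathcal{K}_3 L^{-2}$; this is essentially in \cite{HanO09}, but it must be combined with Lemma \ref{lem:ABL2} and the commutator estimates following Lemma \ref{lem:dd} to confirm that all intermediate operators are defined on $\D(L^2)$ and bounded uniformly in $h$.
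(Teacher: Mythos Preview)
Your overall architecture --- error recursion, Strang smoothing via Lemma~\ref{lem:smoothDirLie}, the second-order identity $I|_{\D(L^2)} = S(I-hL+\tfrac{h^2}{2}L^2)+h^3\mathcal{K}_3$ --- is the same as the paper's, and your treatment of the \emph{homogeneous} part is essentially correct. The genuine gap is in how the parameter $\nu$ enters through the inhomogeneous defect.

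You assert that after cancellation the inhomogeneous defect has the form $h^2\,(\text{bounded})\,Lg$ and that summing against the smoothing bound $\|\TS^{\,n-k}L^{1-\nu}\|\le Ct_{n-k}^{-(1-\nu)}$ yields $h^{1+\nu}$. But that arithmetic gives only first order: with a local contribution of size $h^2\|L^\nu g\|$ after peeling off $L^{1-\nu}$, one gets
\[
h^2\sum_{j=1}^{n}(jh)^{-(1-\nu)} = h^{1+\nu}\sum_{j=1}^{n}j^{-(1-\nu)} \sim h^{1+\nu}\cdot \frac{n^{\nu}}{\nu} = \frac{T^{\nu}}{\nu}\,h,
\]
i.e.\ $O(h)$, not $O(h^{1+\nu})$. (Relatedly, your claim that ``$0\le\nu\le 1$ is exactly the range for which $t^{-(1-\nu)}$ is integrable'' fails at $\nu=0$.) In other words, a local defect of order $h^2$ in any norm can never yield better than global order one through smoothing alone; to reach $h^{1+\nu}$ you need the local defect to be of order $h^{2+\nu}$ in a norm one step weaker.

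This is precisely what the paper arranges, and it is not mere bookkeeping. The paper does \emph{not} separate homogeneous and inhomogeneous parts; it substitutes $u(t_n)=L^{-2}u''(t_n)-L^{-1}g(t_n)-L^{-2}g'(t_n)$ so that all $g$-contributions appear as $L^{-1}g$ and $L^{-2}g'$, and then inserts the identity representations in front of those. This produces terms like $\tfrac{h^3}{4}\mathrm{e}^{\frac h2 A}\mathrm{e}^{\frac h2 B}\mathcal{K}_2 g$ and $h^3\mathcal{K}_3 L^{-1}g$, whose structure allows the interpolation estimates
\[
h^2\|\mathcal{K}_2 L^{-1}f\|\le Ch^{1+\nu}\|L^{\nu}f\|,\qquad
h^3\|L^{-1+\gamma}\mathcal{K}_3 L^{-1}f\|\le Ch^{2+\nu}\|L^{\gamma+\nu}f\|,
\]
for all $0\le\nu\le1$. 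These bounds --- and not a choice of smoothing exponent --- are where $\nu$ enters; the smoothing is always applied with the \emph{fixed} exponent $1-\gamma$. The paper then derives \emph{two} local bounds, $\|E_n(h)\|\le Ch^{1+\nu}(\dots)$ via the first-order identity and $\|L^{-1+\gamma}E_n(h)\|\le Ch^{2+\nu}(\dots)$ via the second-order identity, using the latter for $k<n$ (with $\|S^{\,n-k}L^{1-\gamma}\|\le Ct_{n-k}^{-(1-\gamma)}$) and the former for $k=n$. Your expansion $\mathrm{e}^{\frac h2 A}\mathrm{e}^{\frac h2 B}=I+O(h)L$ stops one order too early to see this structure: the $\tfrac h2 L$ terms in $\mathrm{e}^{\frac h2 A}\mathrm{e}^{\frac h2 B}$ and in $\lambda_1$ \emph{also} cancel, leaving a genuinely $h^2$-small operator (namely $\tfrac{h^2}{4}\mathcal{K}_2$) whose fine properties carry the $\nu$-dependence.
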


\begin{rem}\label{rem:ConvStrang}
Note that full-order convergence of the exponential Strang splitting \eqref{eq:inStrang} requires the additional assumption that $g\in\mathcal{C}([0,T];\D(L^{1+\gamma}))$, for some $\gamma >0$. In particular the inhomogeneity $\psi(t,\cdot,\cdot) = g(t)$ needs to vanish at the boundary, i.e., $\psi(t,\cdot,\cdot)\vert_{\partial\Omega} = 0$.

Also note that if for some $0\leq \gamma\leq 1$ and $\theta>0$, $u_0 \in \D(L^{2+\gamma})$, $g(0)\in \D(L^{1+\gamma})$, $g'(0)\in \D(L^\gamma)$ and $g\in \mathcal{C}^{2+\theta}([0,T];L^2(\Omega))$, then the exact solution of problem \eqref{eq:abM} will satisfy the regularity assumption \eqref{eq:high-reg}, in particular $\sup_{t\in[0,T]} \Vert L^\gamma u''(t)\Vert \leq C$.
\end{rem}

\begin{rem}\label{rem:fullorderSt}
Full-order convergence of the exponential Strang splitting \eqref{eq:inStrang} without unnatural assumptions on the inhomogeneity $g(t)$  can be   obtained when considering the evolution equation \eqref{eq:abM} in the extrapolation space $(\D(L)^\ast,\Vert L^{-1}\cdot\Vert)$. We refer to \cite{Ama1995} for an introduction to scales of Banach spaces. Precisely,  we can adapt the previous proof and show that  the global error measured in the $\Vert L^{-1}\cdot\Vert$ norm satisfies the following bound   for any $\gamma > 0$:
\begin{equation*}
\Vert L^{-1}( u(t_n) - u_n)\Vert \leq C h^2\sup_{t\in [0,T]}\Big( \Vert L^{\gamma} u'(t)\Vert + \Vert L^{\gamma} g(t)\Vert + \Vert L^{\gamma-1} g'(t)\Vert   + \Vert L^{-1} g''(t)\Vert \Big)
\end{equation*}
with a constant $C$ that can be chosen uniformly  with respect to $h$  on $[0,T]$.%, and under %the assumption $g\in \mathcal{C}^2([0,T];\D(L)^\ast)$.
\end{rem}

\begin{proof}
Let $ S = \mathrm{e}^{\frac{h}{2}A}\mathrm{e}^{hB}\mathrm{e}^{\frac{h}{2}A}$. We start off with the analysis of the local error $e_{n+1} = u_{n+1}-u(t_{n+1})$. Subtracting the exact solution \eqref{eq:exactR3} from the numerical approximation \eqref{eq:inStrang} we obtain
\begin{equation}
\begin{aligned}
e_{n+1} &= Se_n + \big(S-\lambda_0\big)u(t_n) + h\big(\mathrm{e}^{\frac{h}{2}A}\mathrm{e}^{\frac{h}{2}B} -\lambda_1\big)g(t_n) \\&\quad + h^2\big(\sfrac{1}{2}\mathrm{e}^{\frac{h}{2}A}\mathrm{e}^{\frac{h}{2}B} -\lambda_2\big) g'(t_n) -R_{3,n}(h)
\end{aligned}
\end{equation}
with $R_{3,n}$ given in \eqref{eq:R3}. In the following we set
\begin{equation*}
E_{n}(h) = \big(S-\lambda_0\big)u(t_n) + h\big(\mathrm{e}^{\frac{h}{2}A}\mathrm{e}^{\frac{h}{2}B} -\lambda_1\big)g(t_n) + h^2\big(\sfrac{1}{2}\mathrm{e}^{\frac{h}{2}A}\mathrm{e}^{\frac{h}{2}B} -\lambda_2\big) g'(t_n).
\end{equation*}
With the identities $\lambda_1 = \sfrac{1}{h}(\lambda_0 -I)L^{-1}$ and $\lambda_2 = \sfrac{1}{h^2}(\lambda_0 -I - hL)L^{-2}$ we obtain
\begin{equation}\label{eq:ob}
\begin{aligned}
E_{n}(h) &= S u(t_n) -\lambda_0 L^{-2}u''(t_n) + h\mathrm{e}^{\frac{h}{2}A}\mathrm{e}^{\frac{h}{2}B}g(t_n) + L^{-1}g(t_n) \\&\quad + \sfrac{h^2}{2}\mathrm{e}^{\frac{h}{2}A}\mathrm{e}^{\frac{h}{2}B}g'(t_n) + L^{-2}g'(t_n) + hL^{-1}g'(t_n).
\end{aligned}
\end{equation}
Note that the following representations of the identity operator are valid on $\D(L)$ and $\D(L^2)$, respectively, see \cite{HanO09},
\begin{equation}\label{id:str}
\begin{aligned}
& I \vert_{\D(L)}= S(I-hL)+\sfrac{h^2}{2}\mathcal{K}_2,\\
& I \vert_{\D(L^2)}= S(I-hL+\sfrac{h^2}{2}L^2)+h^3\mathcal{K}_3,
\end{aligned}
\end{equation}
where  the operators $\mathcal{K}_2$ and $\mathcal{K}_3$ satisfy the following bounds: for all $\nu$ with $0 \leq \nu \leq 1$
\begin{equation}\label{b1}
h^2 \Vert \mathcal{K}_2 L^{-1}f\Vert \leq C h^{1+\nu}\Vert L^\nu f\Vert,
\end{equation}
and for all $\gamma > 0$ and $\nu$ with $0 \leq \nu \leq 1$
\begin{equation}\label{b2}
h^3\Vert L^{-1+\gamma}\mathcal{K}_3L^{-1} f \Vert \leq C  h^{2+\nu}\Vert L^{\gamma+\nu}f\Vert
\end{equation}
for some constants independent of $h$ and for all functions $f$.
Using the first representation of the identity operator given in \eqref{id:str} in front of the terms with $L^{-1}$ and $L^{-2}$ in \eqref{eq:ob} we obtain
\begin{equation}\label{eq:errSt2L}
\begin{aligned}
E_n(h)  =& (S-\lambda_0)L^{-2}u''(t_n) + h\mathrm{e}^{\frac{h}{2}A}\mathrm{e}^{\frac{h}{2}B}(I- \mathrm{e}^{\frac{h}{2}B}\mathrm{e}^{\frac{h}{2}A})g(t_n) \\&+\textstyle \frac{h^2}{2}\mathcal{K}_2 L^{-1}\big(g(t_n)+L^{-1}g'(t_n)+hg'(t_n)\big)+\textstyle \frac{h^2}{2}\mathrm{e}^{\frac{h}{2}A}\mathrm{e}^{\frac{h}{2}B}g'(t_n)-h^2Sg'(t_n).
\end{aligned}
\end{equation}
From the convergence analysis of the exponential Lie splitting with $h$ replaced by $\sfrac{h}{2}$ we know that
\begin{equation}\label{b1b}
\begin{aligned}
&I-\mathrm{e}^{\frac{h}{2}B}\mathrm{e}^{\frac{h}{2}A} = -\sfrac{h}{2}\mathrm{e}^{\frac{h}{2}B}\mathrm{e}^{\frac{h}{2}A}L+\sfrac{h^2}{4}\mathcal{K}_2,
\end{aligned}
\end{equation}
see \eqref{eq:idLie}. Furthermore, by \cite{HanO09} we have  for all functions $f$ and all $\nu$ with $0\leq\nu\leq 1$
\begin{equation}\label{b1bb}
\Vert  (S-\lambda_0)L^{-2} f\Vert \leq C h^{2+\nu} \Vert L^\nu f\Vert
\end{equation}
for some constant $C$ independent of $h$ and $f$.
Thus, using \eqref{b1b} as well as the bounds \eqref{b1} and \eqref{b1bb} in \eqref{eq:errSt2L}, we obtain that  for $0 \leq \nu \leq 1$
\begin{equation}\label{thm:bound1}
\Vert E_{n}(h)\Vert \leq C  h^{1+\nu}\Big( \Vert  u''(t_n)\Vert + \Vert L^{ \nu} g(t_n)\Vert + \Vert  g'(t_n)\Vert \Big).
\end{equation}
Using the second representation of the identity operator given in \eqref{id:str} in front of the terms with $L^{-1}$ and $L^{-2}$ in \eqref{eq:ob} we obtain
\begin{equation}
\begin{aligned}\label{eq:errSt2}
E_{n}(h) &= \big(S -\lambda_0 \big)L^{-2}u''(t_n) +h\mathrm{e}^{\frac{h}{2}A}\mathrm{e}^{\frac{h}{2}B} \Big(I+\mathrm{e}^{\frac{h}{2}B}\mathrm{e}^{\frac{h}{2}A}\big(\sfrac{h}{2}L-I\big)\Big)g(t_n)\\&\quad
+ \sfrac{h^2}{2}\mathrm{e}^{\frac{h}{2}A}\mathrm{e}^{\frac{h}{2}B} \Big(I-\mathrm{e}^{\frac{h}{2}B}\mathrm{e}^{\frac{h}{2}A}\Big) g'(t_n)+h^3 \mathcal{K}_3 L^{-1}g(t_n)+h^3\mathcal{K}_3L^{-2}g'(t_n)\\&\quad+ h^4 \mathcal{K}_3 L^{-1}g'(t_n).
\end{aligned}
\end{equation}
From the convergence analysis of the exponential Lie splitting with $h$ replaced by $\sfrac{h}{2}$ we know that
\begin{equation}\label{eq:St1}
\begin{aligned}
&I+\mathrm{e}^{\frac{h}{2}B}\mathrm{e}^{\frac{h}{2}A}\Bigl(\sfrac{h}{2}L-I\Bigr)=\sfrac{h^2}{4}\mathcal{K}_2,\\
&I-\mathrm{e}^{\frac{h}{2}B}\mathrm{e}^{\frac{h}{2}A} = -\sfrac{h}{2}\mathrm{e}^{\frac{h}{2}B}\mathrm{e}^{\frac{h}{2}A}L+\sfrac{h^2}{4}\mathcal{K}_2,
\end{aligned}
\end{equation}
see again \eqref{eq:idLie}. Thus, inserting the relation \eqref{eq:St1} into \eqref{eq:errSt2} we obtain
\begin{equation}
\begin{aligned}\label{eq:eeerSt}
E_{n}(h) &= (S-\lambda_0)L^{-2}u''(t_n) + \sfrac{h^3}{4}\mathrm{e}^{\frac{h}{2}A}\mathrm{e}^{\frac{h}{2}B}\mathcal{K}_2g(t_n)
- \sfrac{h^3}{2}SLg'(t_n)\\&\quad+\sfrac{h^4}{8}\mathrm{e}^{\frac{h}{2}A}\mathrm{e}^{\frac{h}{2}B}\mathcal{K}_2g'(t_n)
 +h^3 \mathcal{K}_3 L^{-1}g(t_n)+h^3\mathcal{K}_3L^{-2}g'(t_n)\\&\quad+ h^4 \mathcal{K}_3 L^{-1}g'(t_n).
\end{aligned}
\end{equation}
From the domain invariance of $\D(L)$ under $S$ and \cite{HanO09H}, we know that for $0<\gamma<1$
\begin{equation*}
\Vert L^{-1+\gamma} (S-\lambda_0) L^{-2}u''(t_n)\Vert \leq Ch^3\Vert L^\gamma u''(t_n)\Vert.
\end{equation*}
Thus, using the domain invariance of the domain $\D(L^{1-\gamma})$, $0<\gamma<1$ under the semigroups in \eqref{eq:eeerSt}, as well as the bound given in \eqref{b2}, we obtain for $0\leq \nu \leq 1$ the following estimate
\begin{equation}\label{thm:bound2}
\Vert L^{-1+\gamma} E_{n}(h)\Vert \leq C h^{2+\nu}\Big( \Vert L^\gamma u''(t_n)\Vert + \Vert L^{\gamma + \nu} g(t_n)\Vert + \Vert L^{\gamma} g'(t_n)\Vert \Big).
\end{equation}
Thus, using the bounds given in \eqref{thm:bound1} and \eqref{thm:bound2} as well as the smoothing property for the exponential Strang splitting, see Lemma \ref{lem:smoothDirLie}, we obtain for $0\leq \nu \leq 1$
\begin{equation}
\begin{aligned}\label{glerr:Strang}
\Vert e_{n+1} \Vert &\leq \sum_{k=0}^{n-1}\Vert S^{n-k} L^{1-\gamma}\Vert \Vert L^{-1+\gamma}E_k(h)\Vert + \Vert E_n(h)\Vert + \sum_{k=0}^{n-k}\Vert S^{n-k}\Vert \Vert R_{3,k}(h)\Vert\\
&\leq Ch^{1+\nu}\sum_{k=0}^{n-1}\frac{h}{((n-k)h)^{1-\gamma}}  \Big( \Vert L^\gamma u''(t_k)\Vert + \Vert L^{\gamma + \nu} g(t_k)\Vert + \Vert L^{\gamma} g'(t_k)\Vert \Big) \\&\quad+ Ch^{1+\nu}\Big( \Vert  u''(t_n)\Vert + \Vert L^{ \nu} g(t_n)\Vert + \Vert  g'(t_n)\Vert \Big) + \sum_{k=0}^{n-k} \Vert R_{3,k}(h)\Vert.
\end{aligned}
\end{equation}
  By \eqref{eq:R3} we have $\Vert R_{3,k}(h)\Vert \leq C h^3\mathrm{sup}_{t\in[0,T]}\Vert g''(t)\Vert$.    Hence, approximating the sum in \eqref{glerr:Strang} by its Riemann integral, leads to the desired estimate \eqref{thm:glerrSt}.
\end{proof}

%%%%%%%%%%%%%%%%%%%%%%%%%%%%%%%%%%%%%%
\section{Application to inhomogeneous Dirichlet boundary conditions}\label{inhB}
%%%%%%%%%%%%%%%%%%%%%%%%%%%%%%%%%%%%%%

The main aim of this section is to  understand the disappointing behavior of splitting methods when applied to problems involving inhomogeneous boundary conditions with the aid of the convergence results derived in the previous section.  Our model problem in this section thus reads
\begin{equation}\label{eq:abMH}
\begin{aligned}
& \partial_t w(t,x,y) = \mathcal{L}(\partial_x,\partial_y) w(t,x,y),\quad (x,y)\in \Omega=(0,1)^2,\ t\in ]0,T],\\
&w(0,x,y) = w_0(x,y),\\
&w(t,\cdot,\cdot)\big\vert_{\partial\Omega} = f(t,\cdot,\cdot) \quad\text{for all } t\in [0,T],
\end{aligned}
\end{equation}
where $\mathcal{L}(\partial_x,\partial_y) = \partial_x(a(x,y)\partial_x)+\partial_y(b(x,y)\partial_y)$. % with domain $\D(L_{f(t)})=\{u \in H^2(\Omega)\,; \,u\vert_{\partial\Omega} = f(t,\cdot,%\cdot)\vert_{\partial\Omega}\}$.
In the following we denote by $Gf(t)$ the Dirichlet map, i.e., the solution to the elliptic problem
\begin{equation*}
\begin{aligned}
&\mathcal{L}(\partial_x,\partial_y) v= 0 \text{ in } \Omega,\\
& v\vert_{\partial\Omega} = f(t).
\end{aligned}
\end{equation*}
In Section \ref{sec:L2} we will discuss the case where the Dirichlet map $Gf(t)$ is analytically known and smooth in space and time, whereas in Section \ref{sec:dual} we focus on the more common situation where $f$ is only known as a boundary function.

%%%%%%%%%%%%%%%%%%%%
\subsection{Formulation as an inhomogeneous equation in $L^2(\Omega)$}\label{sec:L2}
%%%%%%%%%%%%%%%%%%%%
Here we assume that the Dirichlet map $Gf(t)=: F(t)$ is analytically known and in particular smooth in space and time, i.e., our model problem \eqref{eq:abMH} satisfies
\begin{equation}\label{eq:abMH2}
w(t,\cdot,\cdot)\big\vert_{\partial\Omega} = F(t,\cdot,\cdot)\big\vert_{\partial\Omega} \quad\text{for all } t\in [0,T].
\end{equation}

The ansatz is to rewrite \eqref{eq:abMH}, \eqref{eq:abMH2} as an inhomogeneous evolution equation in $L^2(\Omega)$ with homogeneous Dirichlet boundary conditions. Employing the transformation $U(t,x,y) = w(t,x,y) -  F(t,x,y)$, we see that $U$ is the solution to the problem
\begin{equation}\label{eq:homi}
\begin{aligned}
& \partial_t U(t,x,y) = \mathcal{L}(\partial_x,\partial_y) U(t,x,y)+\psi(t,x,y),\quad (x,y)\in \Omega=(0,1)^2,\ t\in ]0,T],\\
&U(0,x,y) = U_0(x,y),\\
&U(t,\cdot,\cdot)\big\vert_{\partial\Omega} =0
\end{aligned}
\end{equation}
with $\psi(t,x,y) = \mathcal{L}(\partial_x,\partial_y)  F(t,x,y)-\partial_t  F(t,x,y)$  and $U_0(x,y) = w_0(x,y) - F(0,x,y)$.  This equation can be formulated as an abstract evolution equation in $L^2(\Omega)$ with $u(t) = U(t,\cdot,\cdot)$ and $g(t) = \psi(t,\cdot,\cdot)$, i.e.,
\begin{equation}\label{eq:abMHD}
\begin{aligned}
&u'(t) = Lu(t)+g(t), \quad t\in]0,T],\\
&u(0) = u_0,
\end{aligned}
\end{equation}
where now the problem is equipped with homogeneous Dirichlet boundary conditions, i.e., $\D(L) = H^2(\Omega)\cap H_0^1(\Omega)$. Thus, we can apply the convergence results for the exponential Lie and Strang splitting stated in Section \ref{sec:conv}.

We are in particular interested in how the regularity assumptions on the initial value $u_0$ and the inhomogeneity $g$ of the reformulated inhomogeneous problem \eqref{eq:abMHD} relate to the regularity assumptions on $w_0$ and the boundary data $F$ of the original inhomogeneous boundary value problem \eqref{eq:abMH}. Therefore we introduce the operator $L_{F(0)} = \mathcal{L}(\partial_x,\partial_y)$ with the domain
\begin{equation*}
\D(L_{F(0)}) = \left\{ \varphi\in H^2(\Omega) \,;\, \varphi\vert_{\partial\Omega} = F(0,\cdot,\cdot)\vert_{\partial\Omega}\right\}.
\end{equation*}

First-order convergence of the exponential Lie splitting requires the following regularity assumption on the initial value $u_0$ and inhomogeneity $g$
\begin{equation}\label{ass:inhLie}
u_0 \in \D(L^\gamma), \qquad u'(0)=Lu_0 + g(0) \in \D(L^\gamma),
\qquad g \in \mathcal{C}^{1+\theta} ([0,T]; L^2(\Omega))
\end{equation}
for some $\gamma >0$, see Theorem \ref{thm:inhLie}. Expressed in terms of the original data in \eqref{eq:abMH}, assumption \eqref{ass:inhLie} reads
\begin{equation*}
\begin{aligned}
&u_0 = w_0- F(0) \in \D(L^\gamma),\\&u'(0) =  w'(0)- F'(0)= L_{F(0)} w(0) - F'(0) \in \D(L^\gamma),\\
& \mathcal{L}(\partial_x,\partial_y) F(t,\cdot,\cdot)-\partial_t  F (t,\cdot,\cdot)\in \mathcal{C}^{1+\theta} ([0,T]; L^2(\Omega)).
\end{aligned}
\end{equation*}
Thus, for first order convergence of the exponential Lie splitting method, the initial value of the original problem \eqref{eq:abMH} needs to satisfy for some $\gamma >0$
\begin{equation}\label{eq:blie}
w_0 \in \D(L_{F(0)})\cap H^{2+2\gamma}(\Omega) = \left\{\varphi\in  H^{2+2\gamma}(\Omega)\,;\, \varphi\vert_{\partial\Omega} = F(0,\cdot,\cdot)\vert_{\partial\Omega}\right\}.
\end{equation}

Hence upon using Theorem \ref{thm:inhStrang},
a convergence rate of order $1.25-\varepsilon$ of the exponential Strang splitting method can be achieved under the additional regularity assumptions
\begin{equation*}
\begin{aligned}
&g\in \mathcal{C}^{2+\theta}([0,T];L^2(\Omega)),\\
&u_0 \in \D(L^\gamma),\quad
u'(0)=Lu_0 + g(0)\in \D(L^\gamma),\\
&u''(0)=L^2u_0 + Lg(0) + g'(0)\in \D(L^\gamma),
\end{aligned}
\end{equation*}
for some $\gamma >0$.   In terms of the solution $w$ and boundary data $F$ of the original problem \eqref{eq:abMH} the additional assumptions read
\begin{equation*}
\begin{aligned}
& u''(0) = w''(0) -  F''(0) = L_{F(0)} ^2w(0)- F''(0)\in \D(L^\gamma),\\
&  \mathcal{L}(\partial_x,\partial_y) F(t,\cdot,\cdot)-\partial_t  F (t,\cdot,\cdot)\in \mathcal{C}^{2+\theta} ([0,T]; L^2(\Omega)).
\end{aligned}
\end{equation*}
Thus, convergence of order $1.25-\varepsilon$ requires the additional smoothness assumption on the initial value $w_0$
\begin{equation}\label{eq:bstrang}
\begin{aligned}
w_0 \in   \,\D(L^2_{F(0)})\cap H^{4+2\gamma}(\Omega) = \left\{\varphi\in  H^{4+2\gamma}(\Omega)\,;\, \mathcal{L}(\partial_x,\partial_y) \varphi \vert_{\partial\Omega} = \varphi \vert_{\partial\Omega} =F(0,\cdot,\cdot)\vert_{\partial\Omega} \right\}
\end{aligned}
\end{equation}
for some $\gamma >0$.

Note that in order to have full-order convergence of the Strang splitting applied to inhomogeneous equations, the inhomogeneity $g$ needs to lie in the domain $\D(L^{1+\gamma})$ for some $\gamma>0$, precisely $g\in\mathcal{C}([0,T]; \D(L^{1+\gamma}))$, see Remark \ref{rem:ConvStrang}. In terms of the original problem \eqref{eq:abMH} this leads to the following assumption on the boundary data $F$
\begin{equation*}
\mathcal{L}(\partial_x,\partial_y) F(t,\cdot,\cdot)-\partial_t  F (t,\cdot,\cdot)\in \mathcal{C} ([0,T]; H^{2+2\gamma}(\Omega)\cap H_0^1(\Omega))
\end{equation*}
for some $\gamma > 0$. We summarize our findings in the following corollary.

\begin{cor}\label{cor:bc0}
The exponential Lie and Strang splitting applied to \eqref{eq:abMH2} are convergent of order $1$, respectively, $1.25-\varepsilon$ (for every $\varepsilon>0$) under the natural assumptions \eqref{eq:blie} and \eqref{eq:bstrang}, respectively. Full second-order convergence of the exponential Strang splitting method is obtained if the boundary data $F$ satisfy
$$
\partial_t  F (t,\cdot,\cdot)\vert_{\partial\Omega} = \mathcal{L}(\partial_x,\partial_y) F(t,\cdot,\cdot)\vert_{\partial\Omega}
$$
for all $t\in [0,T]$.
\end{cor}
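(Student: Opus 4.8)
The corollary is a clean translation of the abstract convergence results (Theorems~\ref{thm:inhLie} and \ref{thm:inhStrang}) into the language of the original boundary-value problem \eqref{eq:abMH2}, via the substitution $U = w - F$. Since most of the analytic work has already been carried out in the discussion preceding the corollary, the proof is essentially a matter of collecting that discussion and verifying that the stated hypotheses on $w_0$ and $F$ do indeed imply the abstract hypotheses needed to invoke the theorems. I would organize the proof in three short pieces, one for each assertion.

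First, for the exponential Lie splitting: starting from \eqref{eq:blie}, I would observe that $w_0\in H^{2+2\gamma}(\Omega)$ with $w_0\vert_{\partial\Omega}=F(0)\vert_{\partial\Omega}$ means exactly $u_0=w_0-F(0)\in H^{2+2\gamma}(\Omega)\cap H_0^1(\Omega)=\D(L^{1+\gamma})\subset\D(L^\gamma)$, using that $\D(L^\sigma)$ equals the appropriate interpolation space (which for $\sigma<1/4$ is free of boundary conditions and for $1/4\le\sigma\le 1$ carries the homogeneous Dirichlet condition). Likewise $u'(0)=L_{F(0)}w(0)-F'(0)=Lu_0+g(0)\in\D(L^\gamma)$ by the computation already displayed, and smoothness of $F$ in time gives $g\in\mathcal C^{1+\theta}([0,T];L^2(\Omega))$. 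This is precisely hypothesis \eqref{ass:inhLie}, so Theorem~\ref{thm:inhLie} applies and yields $\Vert u(t_n)-u_n\Vert\le Ch$; since $w(t_n)-w_n=u(t_n)-u_n$ (the discrete scheme for $w$ being obtained from that for $u$ by the same fixed shift $F(t_n)$), this is order one for the original problem.

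Second, for the exponential Strang splitting at order $1.25-\varepsilon$: I would argue that \eqref{eq:bstrang} gives $u_0=w_0-F(0)\in H^{4+2\gamma}(\Omega)$ with $u_0\vert_{\partial\Omega}=0$ and $\mathcal L u_0\vert_{\partial\Omega}=\mathcal L w_0\vert_{\partial\Omega}-\mathcal L F(0)\vert_{\partial\Omega}$; here one uses the assumption $\mathcal L\varphi\vert_{\partial\Omega}=F(0)\vert_{\partial\Omega}=\varphi\vert_{\partial\Omega}$ in the definition of $\D(L^2_{F(0)})$ together with the fact that $F$ is $\mathcal L$-harmonic so $\mathcal L F(0)\vert_{\partial\Omega}$ vanishes in the relevant sense — giving $u_0\in\D(L^2)$, in fact $\D(L^{2+\gamma})$ by Lemma~\ref{lem:L3} and the interpolation characterization. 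The remaining conditions $u'(0),u''(0)\in\D(L^\gamma)$ and $g\in\mathcal C^{2+\theta}([0,T];L^2(\Omega))$ follow from the displayed identities and time-regularity of $F$, so Theorem~\ref{thm:inhStrang} applies with, say, $\nu=1/4-\varepsilon'$ and a $\gamma'<1/4$ chosen so that $L^{\gamma'+\nu}g\in L^2$ — note $g(t)\in L^2(\Omega)$ only, which forces $\gamma'+\nu<1/4$, hence the order $1+\nu=1.25-\varepsilon$. Finally, for the full second-order claim: under $\partial_t F(t)\vert_{\partial\Omega}=\mathcal L F(t)\vert_{\partial\Omega}$ one has $g(t)=\mathcal L F(t)-\partial_t F(t)$ vanishing on $\partial\Omega$, and moreover lying in $\D(L^{1+\gamma})$ after noting $g$ is smooth in space; then Remark~\ref{rem:ConvStrang} (taking $\nu=1$, which is now permitted because $L^{\gamma+1}g\in L^2$) gives $\Vert u(t_n)-u_n\Vert\le Ch^2$.

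**Main obstacle.** The only genuinely delicate point is the matching of the fractional-power domains $\D(L^\sigma)$ with the concrete Sobolev/boundary-condition descriptions appearing in \eqref{eq:blie} and \eqref{eq:bstrang} — i.e. the claims $\D(L^{1+\gamma})=H^{2+2\gamma}(\Omega)\cap H_0^1(\Omega)$ and $\D(L^{2+\gamma})\subset H^{4+2\gamma}(\Omega)$ with the $\mathcal L\varphi\vert_{\partial\Omega}=0$ compatibility condition. This rests on the regularity statements of Lemma~\ref{lem:L3}, the corner-domain subtleties it flags, and the Fujiwara-type result that $\D(L^\sigma)$ is boundary-condition-free exactly for $\sigma<1/4$; a careful proof should either cite these precisely or restrict $\gamma$ to the range in which the identifications are unambiguous. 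Everything else is bookkeeping: substituting $U=w-F$, reading off $\psi=\mathcal L F-\partial_t F$ and $U_0=w_0-F(0)$, and invoking the theorems verbatim.
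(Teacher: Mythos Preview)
Your proposal is correct and follows the same route as the paper. In the paper the corollary carries no separate proof; it is explicitly announced as a summary of the preceding discussion in Section~\ref{sec:L2}, which performs exactly the translation you describe: set $U=w-F$, read off $g=\mathcal L F-\partial_tF$, and verify that \eqref{eq:blie} and \eqref{eq:bstrang} are precisely the concrete versions of the abstract hypotheses \eqref{ass:inhLie} and \eqref{eq:reg-data3} needed to invoke Theorems~\ref{thm:inhLie} and~\ref{thm:inhStrang}, with the full-order Strang condition coming from Remark~\ref{rem:ConvStrang}. Your identification of the one genuinely technical point --- matching $\D(L^\sigma)$ with the Sobolev spaces carrying the appropriate boundary conditions via Lemma~\ref{lem:L3} and the Fujiwara result --- is also on target; the paper relies on these same facts implicitly in the discussion.
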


%%%%%%%%%%%%%%%%%%%%
\subsection{Formulation as an inhomogeneous equation in the dual space $(\D(L)^\ast, \Vert L^{-1}\cdot\Vert)$}\label{sec:dual}
%%%%%%%%%%%%%%%%%%%%

Here the Ansatz is to rewrite the original problem \eqref{eq:abMH} as an abstract evolution equation in the extrapolation space $\D(L)^\ast$ with the corresponding $\Vert L^{-1}\cdot\Vert$ norm. First we formulate the problem in $L^2(\Omega)$ as follows
\begin{equation}\label{eq:abMHL}
\begin{aligned}
&L^{-1}y'(t) = y(t) + Gf(t),\\
&y(0) = y_0=w(0),
\end{aligned}
\end{equation}
see \cite{Las86} for further details. In order to analyze the convergence of the splitting methods applied to \eqref{eq:abMHL}, we need to reformulate the equation as an evolution equation in the extrapolation space $(\D(L)^\ast, \Vert L^{-1}\cdot\Vert)$, i.e.,
\begin{equation}\label{eq:yLL}
\begin{aligned}
&y'(t) = Ly(t) + LGf(t),\\
&y(0) = y_0.
\end{aligned}
\end{equation}
Note that this is the usual approach if the Dirichlet map of the boundary data is not known analytically.  For instance, in a simple one dimensional situation with constant coefficients, namely
\begin{equation*}
\begin{aligned}
& \partial_t w(t,x) = \partial_{xx} w(t,x),\quad x\in (0,1),\ t\in ]0,T],\\
&w(0,x) = w_0(x),\\
&w(t,0)=\alpha(t),\quad w(t,1) = \beta(t),
\end{aligned}
\end{equation*}
employing standard second-order central differences with grid size $\Delta x$, yields  a semidiscrete problem of type
\begin{equation*}
W'(t) = \frac{1}{\Delta x^2}\text{diag}\,[1,-2,1]W(t) + \frac{1}{\Delta x^2}\big[\alpha(t),0,\ldots,0,\beta(t)\big]^{\sf T},
\end{equation*}
which is of the form \eqref{eq:yLL}.

First, we analyze the convergence in the dual space $(\D(L)^\ast, \Vert L^{-1}\cdot\Vert)$ and consider
\begin{equation}\label{eq:uLL}
\begin{aligned}
& u'(t) = Lu(t) + Gf(t),\\
& u(0 ) = L^{-1} y_0
\end{aligned}
\end{equation}
with $u(t) = L^{-1} y(t)$. Note that due to this transformation, we only obtain a convergence result for the original solution $y(t)$ in the extrapolation space $\D(L)^\ast$, i.e., with respect to the $\Vert L^{-1}\cdot\Vert$ norm.

First-order convergence of the exponential Lie splitting in $\D(L)^\ast$ requires the regularity assumptions
\begin{equation*}
y_0 \in \D(L^{\gamma-1}),\ y'(0)=Ly_0 + LGf(0) \in \D(L^{\gamma-1}), \ f\in \mathcal{C}^{1+\theta}([0,T]; L^2(\partial \Omega))
\end{equation*}
for some $\gamma >0$. Thus, the initial value $y_0$ and boundary data $f$ need to satisfy
\begin{equation}\label{eq:bblie}
y_0 \in H^{2\gamma}(\Omega), \quad f\in \mathcal{C}^{1+\theta}([0,T]; L^2(\partial \Omega))
\end{equation}
for some $\gamma >0$.

Convergence of order $1.25-\varepsilon$ of the exponential Strang splitting in $\D(L)^\ast$ requires
\begin{equation*}
y''(0) =L^2y_0 + L^2Gf(0)+LGf'(0) \in \D(L^{\gamma-1}), \quad f\in \mathcal{C}^{2+\theta}([0,T]; L^2(\partial \Omega)).
\end{equation*}
Thus, the initial value $y_0$ needs to satisfy
\begin{equation}\label{eq:bbstrang}
y_0 \in \D(L_{f(0)})\cap H^{2+2\gamma}(\Omega) = \left\{ \varphi\in H^{2+2\gamma}(\Omega)\,;\, \varphi\vert_{\partial\Omega} = f(0)\right\}
\end{equation}
for some $\gamma >0$.

Considering the convergence in $L^2(\Omega)$, i.e., measuring the error of the splitting methods applied to \eqref{eq:yLL} in the standard $L^2(\Omega)$ norm, both splitting methods suffer from an order reduction to $\frac{1}{4}-\varepsilon$ for all $\varepsilon >0$,   as illustrated in  Figure \ref{figureBC}  below.  This can easily be seen in the local error representation of the splitting methods, replacing the inhomogeneity $g(t_n)$ with $Lg(t_n)$ in \eqref{eq:localErrLie} and \eqref{eq:eeerSt}, respectively. For the Lie splitting and $g$ replaced by $Lg$ in the local error \eqref{eq:localErrLie}, the term $h^2\mathcal{K}_2 g(t_n)$ can only be bounded by the loss of powers in $h$, as $\mathcal{K}_2^\gamma g(t_n)$ is only bounded for $\gamma<\frac{1}{4}-\varepsilon$, i.e.,
\begin{equation*}
h^2\Vert\mathcal{K}_2 g(t_n)\Vert \leq h^{1+\frac{1}{4}-\varepsilon} \Vert \mathcal{K}_2^{\frac{1}{4}-\varepsilon} g(t_n)\Vert.
\end{equation*}
Thus, the global error of the exponential Lie splitting in $L^2(\Omega)$ reduces to $\frac{1}{4}-\varepsilon$. Similarly, one can show the same for the exponential Strang splitting method. A numerical experiment where the inhomogeneity is simply chosen $g(t)=1$ confirms this theoretical observation, see Figure \ref{figureBC}.

Again we summarize our findings in a corollary.
\begin{cor}\label{cor:bc}
Measuring the error with respect to the $\Vert L^{-1}\cdot\Vert$ norm the exponential Lie and Strang splitting applied to \eqref{eq:yLL} are convergent with order $1$, respectively, $1.25-\varepsilon$ (for every $\varepsilon>0$) under the natural assumptions \eqref{eq:bblie} and \eqref{eq:bbstrang}, respectively. With respect to the standard $L^2(\Omega)$ norm both methods suffer from an order reduction to $0.25-\varepsilon$ (for every $\varepsilon>0$).
\end{cor}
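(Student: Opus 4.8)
The final statement to prove is Corollary~\ref{cor:bc}, which collects the convergence results for the two formulations of the inhomogeneous-Dirichlet problem.

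\medskip

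The plan is to deduce Corollary~\ref{cor:bc} from the two main convergence theorems, Theorem~\ref{thm:inhLie} and Theorem~\ref{thm:inhStrang}, applied to the abstract equation \eqref{eq:uLL}, together with the regularity transfer via analytic-semigroup estimates summarised in \eqref{p1eq:regularity}--\eqref{eq:high-reg}. First I would fix the setting: since \eqref{eq:yLL} is posed in the extrapolation space $\D(L)^\ast$ and $u(t)=L^{-1}y(t)$ solves \eqref{eq:uLL} with inhomogeneity $g(t)=Gf(t)$, measuring the error of the splitting schemes for $y$ in the $\Vert L^{-1}\cdot\Vert$ norm is exactly measuring the error for $u$ in the $L^2(\Omega)$ norm. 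So the two positive assertions of the corollary are nothing but Theorems~\ref{thm:inhLie} and~\ref{thm:inhStrang} (with $\nu=\frac14-\varepsilon$ in the Strang case, giving order $1.25-\varepsilon$) applied to \eqref{eq:uLL}, provided the right-hand sides of \eqref{eq:errglLie} and \eqref{thm:glerrSt} are finite. That finiteness is precisely what assumptions \eqref{eq:bblie} and \eqref{eq:bbstrang} guarantee: using the regularity characterisations \eqref{p1eq:reg-data2}--\eqref{eq:high-reg}, the conditions $y_0\in H^{2\gamma}(\Omega)$, $f\in\mathcal{C}^{1+\theta}$ ensure $u_0=L^{-1}y_0\in\D(L^{1+\gamma/2})$-type regularity and $g=Gf\in\mathcal{C}^{1+\theta}([0,T];L^2(\Omega))$, hence $\sup_t\Vert L^{\gamma'}u'(t)\Vert<\infty$ for a suitable small $\gamma'>0$; and analogously $y_0\in\D(L_{f(0)})\cap H^{2+2\gamma}$ together with $f\in\mathcal{C}^{2+\theta}$ delivers \eqref{eq:high-reg} for $u$, hence $\sup_t\Vert L^{\gamma'}u''(t)\Vert<\infty$. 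I would spell out these implications (the key point is that $Gf(t)\in\mathcal{C}([0,T];L^2(\Omega))$ without any boundary condition on $Gf$, while the compatibility conditions on $y_0$ at $\partial\Omega$ are exactly what allow $L u_0+g(0)$ and $L^2u_0+Lg(0)+g'(0)$ to land in $\D(L^{\gamma})$).

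\medskip

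For the order-reduction claim---the part of the corollary that is genuinely new and the main obstacle---I would argue as sketched in the text just before the corollary. One formulates \eqref{eq:yLL} directly in $L^2(\Omega)$, i.e.\ one keeps the inhomogeneity $LGf(t)$ rather than $Gf(t)$. Tracing through the derivation of the local error, this amounts to replacing $g(t_n)$ by $Lg(t_n)$ in the Lie-splitting local error \eqref{eq:localErrLie} and in the Strang-splitting local error \eqref{eq:eeerSt}. In \eqref{eq:localErrLie} the offending term becomes $h^2\mathcal{K}_2 g(t_n)$ (with the original $g$, not $L^{-1}g$), and since the defect operator $\mathcal{K}_2$ only has the smoothing-type bound $h\,\Vert\mathcal{K}_2 L^{-1}\Vert\le C$ while $\D(L^\gamma)$ is boundary-condition-free only for $\gamma<\frac14$, the best available estimate is $h^2\Vert\mathcal{K}_2 g(t_n)\Vert\le h^{1+\frac14-\varepsilon}\Vert L^{\frac14-\varepsilon}g(t_n)\Vert$ for a generic $g\in L^2(\Omega)$ with no boundary regularity; summing this over the $n\sim t_n/h$ steps gives a global error $O(h^{\frac14-\varepsilon})$. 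The same mechanism applies to \eqref{eq:eeerSt} for the Strang splitting: the term $\tfrac{h^3}{4}\mathrm{e}^{\frac h2 A}\mathrm{e}^{\frac h2 B}\mathcal{K}_2 g(t_n)$, after the substitution $g\mapsto Lg$, becomes $\tfrac{h^3}{4}\mathrm{e}^{\frac h2 A}\mathrm{e}^{\frac h2 B}\mathcal{K}_2 L g(t_n)$, and bounding $\mathcal{K}_2 L$ requires two orders of $L$-regularity of which only $\frac14-\varepsilon$ is freely available, leaving $h^{3-2+\frac14-\varepsilon}=h^{1+\frac14-\varepsilon}$ per step---but a second such term, $h^3\mathcal{K}_3 L^{-1} L g(t_n)=h^3\mathcal{K}_3 g(t_n)$, is even worse and caps the rate at $h^{\frac14-\varepsilon}$ as well. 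I would present these two estimates explicitly and note that both are sharp: in the scalar model problem with $g(t)\equiv 1$ the constant inhomogeneity does not vanish at $\partial\Omega$, so $1\notin\D(L^{1/4})$ and none of the borrowed smoothing helps, which is exactly the $0.25-\varepsilon$ behaviour observed numerically in Figure~\ref{figureBC}.

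\medskip

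The delicate points I would be most careful about are: (i) justifying that one may legitimately run the \emph{same} local-error expansions \eqref{eq:localErrLie} and \eqref{eq:eeerSt} with $g$ replaced by $Lg$---this is fine because those identities were algebraic consequences of the recurrences \eqref{p1rek} and the representations \eqref{eq:idLie}, \eqref{id:str} of the identity, which hold independently of what the inhomogeneity is; (ii) exhibiting a concrete choice (the $g\equiv 1$ example) for which the exponent $\frac14-\varepsilon$ cannot be improved, so that the order reduction is genuinely sharp and not merely an artefact of the estimates; and (iii) keeping track of the role of $\varepsilon$: the threshold $\gamma<\frac14$ for $\D(L^\gamma)$ to be free of boundary conditions (cited from \cite{Fujiwara67}) is exactly where the $-\varepsilon$ comes from, and one must take $\varepsilon>0$ strictly to stay inside that range. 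Beyond these, the proof is a bookkeeping exercise: apply Theorems~\ref{thm:inhLie} and~\ref{thm:inhStrang} for the $\Vert L^{-1}\cdot\Vert$-norm statements, redo the two local-error terms above for the $L^2$-norm statement, sum the Riemann sums, and collect everything into the corollary.
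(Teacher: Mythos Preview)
Your proposal is correct and follows essentially the same approach as the paper: the corollary is a summary of the discussion preceding it, and you have reconstructed that discussion faithfully---applying Theorems~\ref{thm:inhLie} and~\ref{thm:inhStrang} to \eqref{eq:uLL} for the $\Vert L^{-1}\cdot\Vert$ statements, and tracing the substitution $g\mapsto Lg$ through the local-error expansions for the $L^2$ order reduction.

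One small technical remark: for the Strang order reduction it is cleaner to work from the coarser representation \eqref{eq:errSt2L} rather than \eqref{eq:eeerSt}. After the substitution $g\mapsto Lg$, the term $\tfrac{h^2}{2}\mathcal{K}_2 L^{-1}g(t_n)$ in \eqref{eq:errSt2L} becomes $\tfrac{h^2}{2}\mathcal{K}_2 g(t_n)$, and the same estimate $h^2\Vert\mathcal{K}_2 g(t_n)\Vert\le h^{1+\frac14-\varepsilon}\Vert\mathcal{K}_2^{\frac14-\varepsilon}g(t_n)\Vert$ that you used for Lie applies directly. Your route through \eqref{eq:eeerSt} also works in the end, but the bookkeeping of powers of $h$ against powers of $L$ in the $\mathcal{K}_2 Lg$ and $\mathcal{K}_3 g$ terms is more delicate than you indicate, and the paper itself simply says ``similarly'' without giving details. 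Your point (ii) about sharpness is also slightly stronger than what the paper proves: the paper establishes only the \emph{upper} bound on the convergence rate analytically and relies on the numerical experiment in Figure~\ref{figureBC} for sharpness, so you should not promise an analytic lower bound unless you intend to supply one.
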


%%%%%%%%%%%%%%%%%%%%%
\section{Numerical experiments}\label{num}
%%%%%%%%%%%%%%%%%%%%%

In this section we numerically analyze the order of convergence of the exponential Lie \eqref{eq:inLie}, Strang \eqref{eq:inStrang} and Strang B \eqref{eq:inStrangB} splitting to underline the theoretical convergence results derived in Section \ref{sec:conv}. The main interest lies in the sharpness of the convergence bounds for the exponential Strang splitting for inhomogeneous equations given in Theorem \ref{thm:inhStrang}, as well as the convergence bound for the exponential Lie and Strang splitting for general inhomogeneous Dirichlet boundary conditions given in Corollary \ref{cor:bc}. In the numerical experiments we use standard symmetric finite differences for the spatial discretization of the split differential operators $\mathcal{A}(\partial_x)=\partial_x\bigl(a(x,y) \partial_x\bigr)$ and $\mathcal{B}(\partial_y)=\partial_y\bigl(b(x,y) \partial_b\bigr)$.

In Examples \ref{ex:red} and \ref{ex:full}  presented below, we solve the inhomogeneous model problem \eqref{eq:model} with different choices of inhomogeneities $\psi$. The numerical results confirm the error bound \eqref{thm:glerrSt}. In Example \ref{ex:inhBC} we solve the inhomogeneous boundary problem \eqref{eq:yLL} with the simplest choice of inhomogeneity $g(t) = 1$. The numerical result confirms the severe order reduction stated in Corollary \ref{cor:bc}.

\begin{ex}[Order reduction of the exponential Strang splitting]\label{ex:red}
In this example we choose the following data in \eqref{eq:model}:
\begin{equation*}
\begin{aligned}
& \mathcal{L}(\partial_x,\partial_y) = \partial_x\big( (2xy+3)\partial_x\big)+\partial_y\big((2xy^4+1) \partial_y\big),\\
& w_0(x,y) = \mathrm{e}^{8-\frac{1}{x(1-x)}-\frac{1}{y(1-y)}},\\ %\qquad c \Vert u_0\Vert_\infty = 1,\\
& \psi(t,x,y)= x(1-x)y(1-y)+t\mathrm{e}^{x^3y}.
\end{aligned}
\end{equation*}
Note that the regularity assumptions \eqref{p1eq:regularity} and \eqref{eq:high-reg} are satisfied as in particular $\psi(0,\cdot,\cdot)\vert_{\partial\Omega} = 0$. Furthermore note that $\psi(t,\cdot,\cdot)\vert_{\partial\Omega} \neq 0$ for $t>0$. Thus, $\gamma+\nu$ must be chosen smaller than one quarter in \eqref{thm:glerrSt}.

Hence, we expect first-order convergence of the exponential Lie splitting \eqref{eq:inLie} and convergence of order $1.25-\varepsilon$ for the exponential Strang splitting method \eqref{eq:inStrang} applied to \eqref{eq:model} formulated in $L^2(\Omega)$. On the contrary, we expect full-order convergence of both methods when formulating the evolution equation \eqref{eq:model} in the extrapolation space $(\D(L)^\ast,\Vert L^{-1}\cdot\Vert)$, i.e., when the error is measured with respect to the $\Vert L^{-1}\cdot\Vert$ norm, cf.~Remark \ref{rem:fullorderSt}. The numerical experiments go in line with our theoretical results, see Figure \ref{figure1} for the $L^2(\Omega)$ and Figure \ref{figure1ex} for the $\D(L)^\ast$ formulation.

\begin{figure}[t]
\centering
\includegraphics[width=66mm]{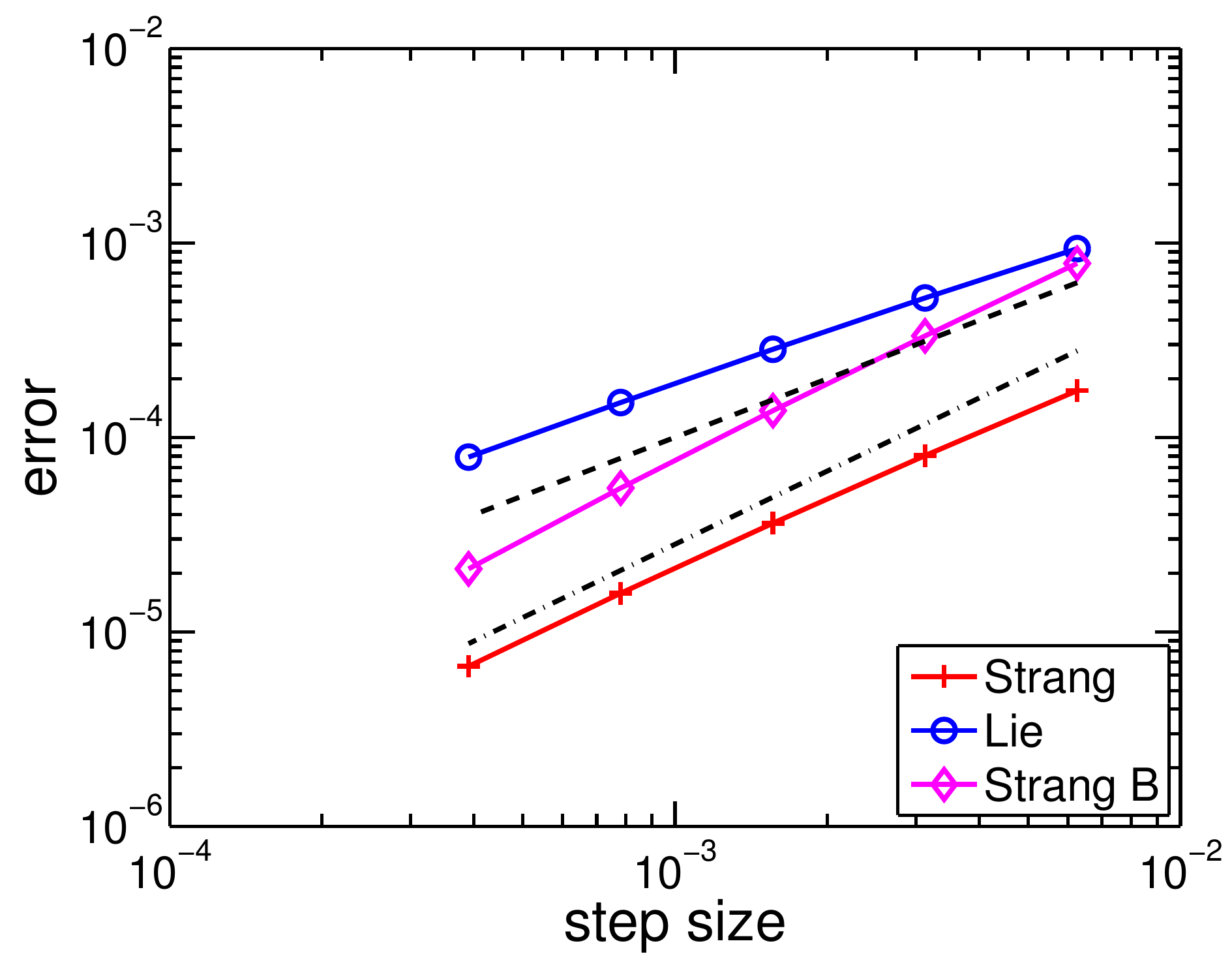}
\caption{Numerical order of the exponential Lie, Strang and Strang B splitting applied to equation \eqref{eq:model} with the data as in Example \ref{ex:red}. The error is measured in a discrete $L^2$ norm. The slopes of the dashed and dash-dotted lines are one and $1.25$, respectively.}\label{figure1}
\end{figure}
\begin{figure}[t]
\centering
\includegraphics[width=66mm]{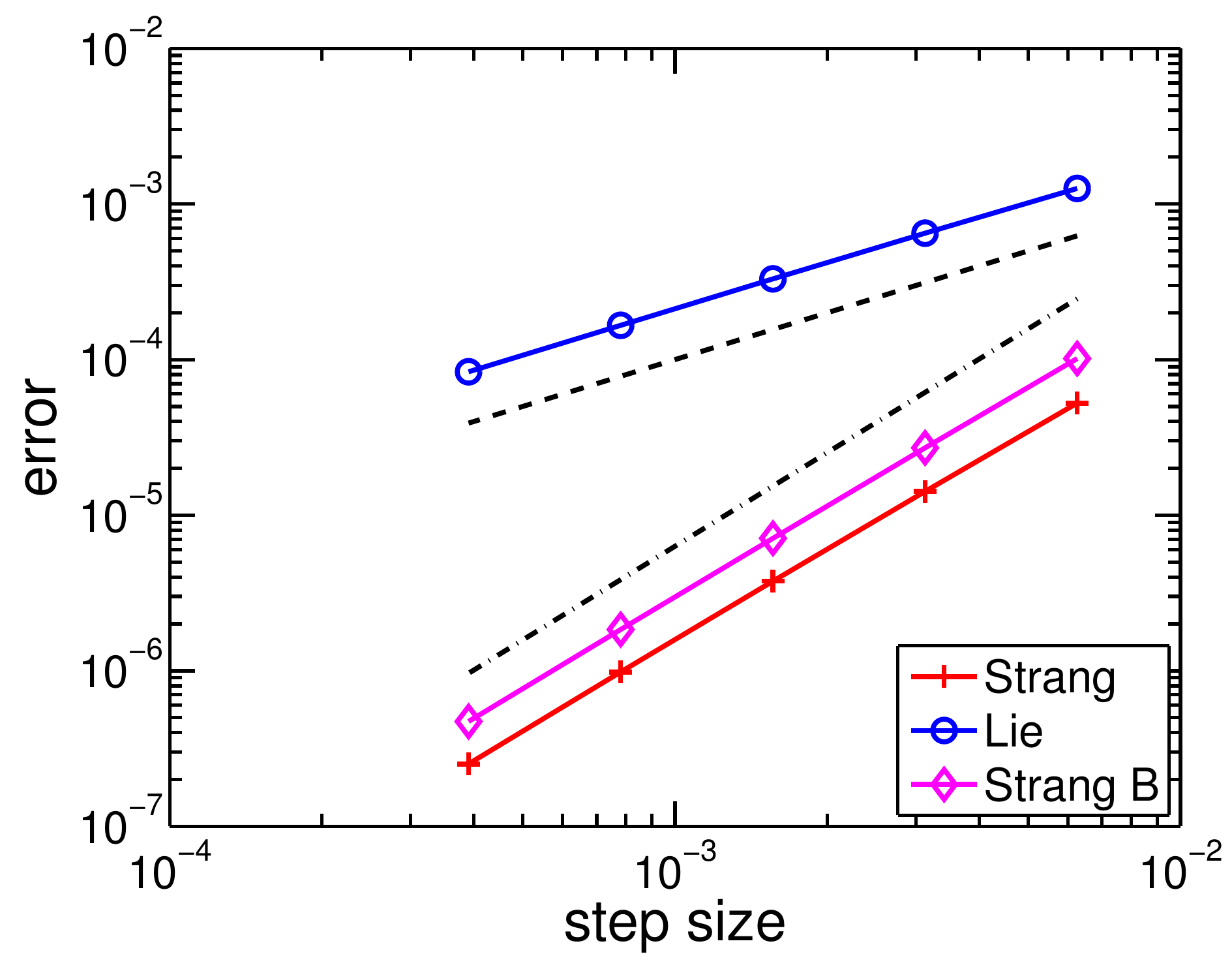}
\caption{Numerical order of the exponential Lie, Strang and Strang B splitting applied to equation \eqref{eq:model} with the data as in Example \ref{ex:red}. The error is measured in a discrete $\D(L)^\ast$ norm, i.e., with respect to $\Vert L^{-1}\cdot\Vert_{L^2}$. The slopes of the dashed and dash-dotted lines are one and two, respectively.}\label{figure1ex}
\end{figure}
\end{ex}

\newpage

\begin{ex}[Full-order convergence of the exponential Strang splitting]\label{ex:full}
Here we choose the following data in \eqref{eq:model}:
\begin{equation*}
\begin{aligned}
& \mathcal{L}(\partial_x,\partial_y) = \partial_x\big( (2xy+3)\partial_x\big)+\partial_y\big((2xy^4+1) \partial_y\big),\\
& w_0(x,y) = \mathrm{e}^{8-\frac{1}{x(1-x)}-\frac{1}{y(1-y)}},\\%\qquad c \Vert u_0\Vert_\infty = 1,\\
& \psi(t,x,y)= x(1-x)y(1-y)\mathrm{e}^{t}.
\end{aligned}
\end{equation*}
Note that the regularity assumptions \eqref{p1eq:regularity} and \eqref{eq:high-reg} are satisfied as $\psi(0,\cdot,\cdot)\vert_{\partial\Omega} = 0$ . In particular $g(t)=\psi(t,\cdot,\cdot)\in \D(L^{1+\gamma})$ for $0\leq \gamma<\sfrac{1}{4}$ and all $t$. Hence, $\nu$ can be chosen equal to one in \eqref{thm:glerrSt}.

Thus, we expect first-order convergence of the exponential Lie splitting \eqref{eq:inLie} and second-order convergence of the exponential Strang splitting method \eqref{eq:inStrang} applied to \eqref{eq:model}. The numerical experiment goes in line with our theoretical results, see Figure \ref{figure2}.
\begin{figure}[t]
\centering
\includegraphics[width=66mm]{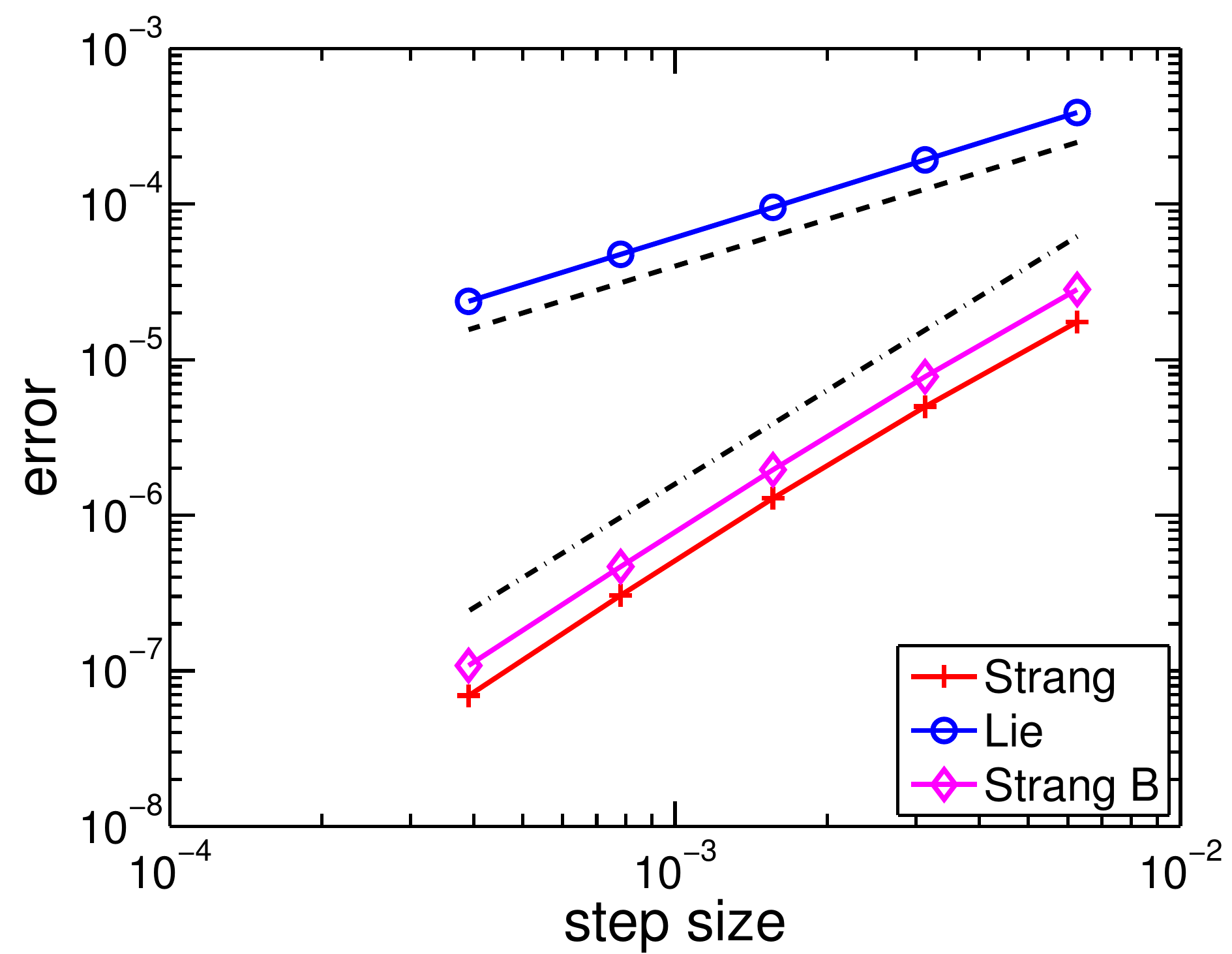}
\caption{Numerical order of the exponential Lie, Strang and Strang B splitting applied to equation \eqref{eq:model} with the data as in Example \ref{ex:full}. The error is measured in a discrete $L^2$ norm. The slopes of the dashed and dash-dotted lines are one and two, respectively.}\label{figure2}
\end{figure}
\end{ex}

\begin{ex}[Inhomogeneous Dirichlet boundary conditions]\label{ex:inhBC}
Here we model the inhomogeneous boundary problem \eqref{eq:yLL} with the data
\begin{equation*}
g(t) = 1,\quad y_0(x,y) = \mathrm{e}^{8-\frac{1}{x(1-x)}-\frac{1}{y(1-y)}}.%,\qquad c \Vert u_0\Vert_\infty = 1.
\end{equation*}
In Figure \ref{figureBC} we clearly see the severe order reduction of the exponential Lie and Strang splitting in $L^2(\Omega)$, which underlines the sharpness of Corollary \ref{cor:bc}.
\begin{figure}[h!]
\centering
\includegraphics[width=66mm]{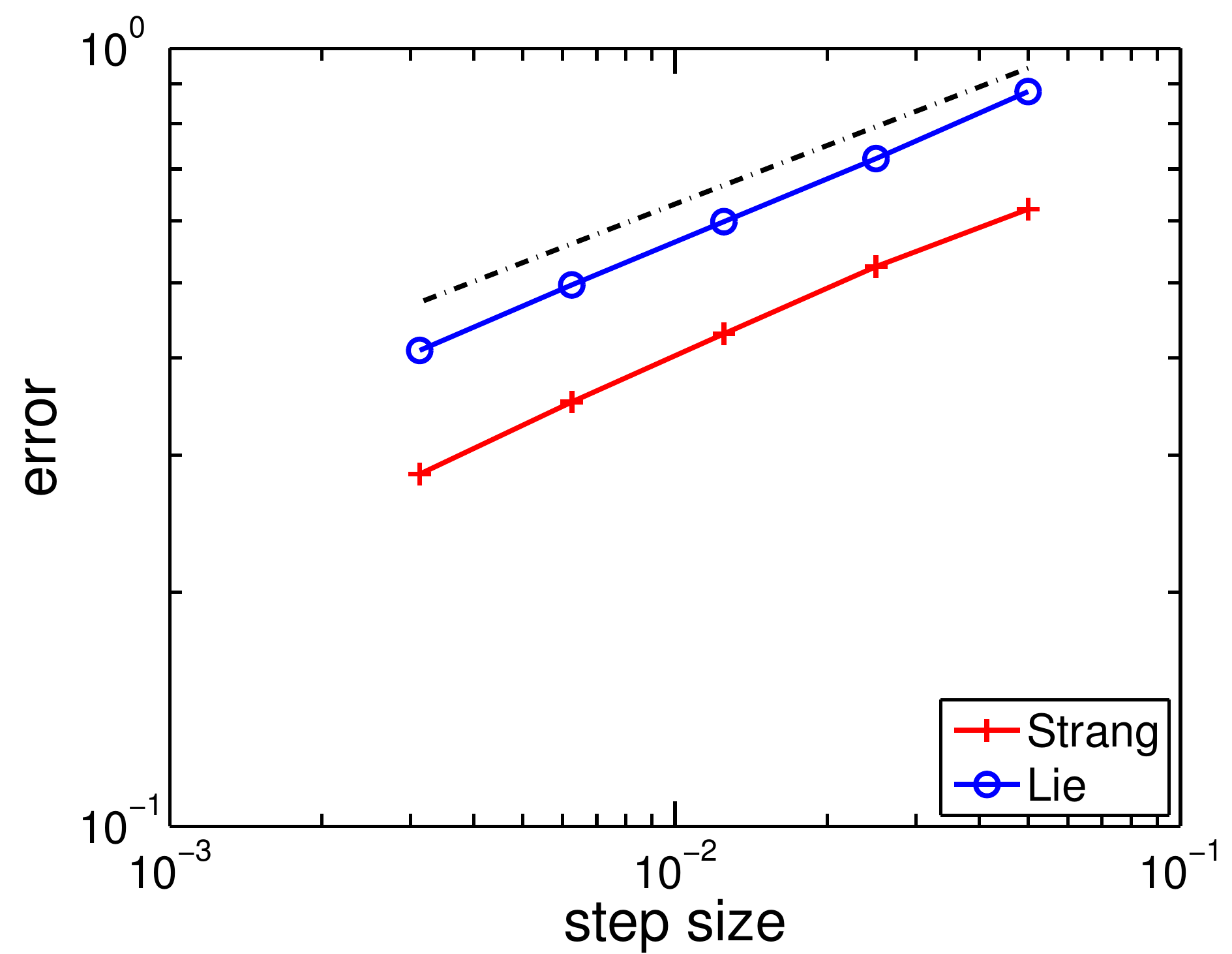}
\caption{Numerical order of the exponential Lie and Strang splitting applied to equation \eqref{eq:yLL} with $g(t) = 1$. The error is measured in a discrete $L^2$ norm. The slope of the dashed line is one quarter.}\label{figureBC}
\end{figure}
\end{ex}
\bibliographystyle{plain}
\bibliography{literatur}

\begin{thebibliography}{10}

\bibitem{Ama1995}
H.~Amann.
\newblock {\em Linear and Quasilinear Parabolic Problems, Volume I}.
\newblock Birkh\"auser, Basel, 1995.

\bibitem{Fujiwara67}
D.~Fujiwara.
\newblock Concrete characterization of the domains of fractional powers of some
  elliptic differential operators of the second order.
\newblock {\em Proc. Japan Acad.}, 43:82--86, 1967.

\bibitem{Grisvard1985}
P.~Grisvard.
\newblock {\em Elliptic Problems in Nonsmooth Domains}.
\newblock Pitman, 1985.

\bibitem{HanO09}
E.~Hansen and A.~Ostermann.
\newblock Exponential splitting for unbounded operators.
\newblock {\em Math. Comp}, 78:1485--1496, 2009.

\bibitem{HanO09H}
E.~Hansen and A.~Ostermann.
\newblock High order splitting methods for analytic semigroups exist.
\newblock {\em BIT}, 49:527--542, 2009.

\bibitem{HeSW}
T.~Hell.
\newblock Compatibility conditions for the {D}irichlet problem of a strictly
  elliptic operator on a rectangle.
\newblock {\em In preparation}.

\bibitem{HunV2003}
W.~Hundsdorfer and J.G. Verwer.
\newblock {\em Numerical Solution of Time-Dependent
  Advection-Diffusion-Reaction Equations}.
\newblock Springer, Berlin Heidelberg, 2003.

\bibitem{JahL00}
T.~Jahnke and C.~Lubich.
\newblock Error bounds for exponential operator splittings.
\newblock {\em BIT}, 40:735--744, 2000.

\bibitem{Las86}
I.~Lasiecka.
\newblock Galerkin approximations of abstract parabolic boundary value problems
  with rough boundary data -- ${L}_p$ theory.
\newblock {\em Math. Comp.}, 47:77--101, 1986.

\bibitem{Lun1995}
A.~Lunardi.
\newblock {\em Analytic Semigroups and Optimal Regularity in Parabolic
  Problems}.
\newblock Birkh\"auser, Basel, 1995.

\bibitem{McLacQ02}
R.I. McLachlan and G.R.W. Quispel.
\newblock Splitting methods.
\newblock {\em Acta Numer.}, 11:341--434, 2002.

\bibitem{OstS12}
A.~Ostermann and K.~Schratz.
\newblock Stability of exponential operator splitting methods for
  non-contractive semigroups.
\newblock {\em To appear in SIAM J. Numer. Anal.}

\bibitem{OstS12b}
A.~Ostermann and K.~Schratz.
\newblock Error analysis of splitting methods for inhomogeneous evolution
  equations.
\newblock {\em Appl. Numer. Math.}, 62:1436--1446, 2012.

\bibitem{Pazy1983}
A.~Pazy.
\newblock {\em Semigroups of Linear Operators and Applications to Partial
  Differential Equations}.
\newblock Springer, New York, 1983.

\end{thebibliography}

\end{document}